\definecolor{qqqqff}{rgb}{0,0,1}
\definecolor{qqwuqq}{rgb}{0,0.39215686274509803,0}
\def\N{\mathbb N}
\def\Z{\mathbb Z}
\def\Q{\mathbb Q}
\def\a{\alpha}
\def\g{q}
\def\v{\varphi}
\theoremstyle{plain}
\newtheorem{theorem}{Theorem}
\newtheorem{lemma}{Lemma}
\newtheorem{corollary}{Corollary}
\newtheorem*{RT}{Rosenberger's Theorem}
\theoremstyle{definition}
\newtheorem{definition}{Definition}
\theoremstyle{remark}
\newtheorem{remark}{Remark}
\newtheorem{example}{Example}
\newtheorem*{notation}{Notation}
\begin{document}

\title{Equable Parallelograms on the Eisenstein Lattice}

\author{Christian Aebi* \and Grant Cairns**}

\newcommand{\acr}{\newline\indent}

\address{\llap{*\,}Coll\`ege Calvin\acr
Geneva\acr
Switzerland 1211}

\email{christian.aebi@edu.ge.ch}

\address{\llap{**\,}Department of Mathematical and Physical Sciences\acr
La Trobe University\acr
Melbourne\acr
Australia 3086}

\email{G.Cairns@latrobe.edu.au}

\subjclass[2020]{52C05, 11D25}
\keywords{Eisenstein lattice, equable, parallelogram}

\begin{abstract}
This paper studies equable parallelograms whose vertices lie on the Eisenstein lattice. Using Rosenberger's Theorem on generalised Markov equations, we show that the set of these parallelograms forms naturally an infinite tree, all of whose vertices  have degree 4, bar the root which has degree 3. This study naturally complements the authors' previous study of 
equable parallelograms whose vertices lie on the integer lattice.
\end{abstract}

\maketitle

\section{Introduction} 
A planar polygon is said to be \emph{equable} if its perimeter equals its area. 
 
\begin{definition}
An \emph{Eisenstein lattice equable parallelogram} (or \emph{ELEP}, for short) is an equable parallelogram whose 
vertices lie on the Eisenstein  lattice $\Z[\omega]$, where $\omega=-\frac12+i\frac{\sqrt3}2$. 
\end{definition}

In \cite{ACLEQI,ACLEQII,ACLEQIII}, we investigated equable quadrilaterals with vertices on the integer  lattice.
This present paper begins a project of replicating this investigation for the Eisenstein lattice, with the goal of comparing the results.
In particular, in \cite{ACLEQI} we studied equable parallelograms with vertices on the integer lattice. This present paper follows the general approach adopted in \cite{ACLEQI}, but it can be read independently of \cite{ACLEQI}. We find that while the mathematics in this paper is very similar to that of \cite{ACLEQI}, the results are somewhat simpler. We saw in  \cite{ACLEQI} that the equable parallelograms with vertices on the integer  lattice form a forest of three trees, corresponding to three of Rosenberger's six generalized Markov equations; these are equations M, R1 and R3 in the notation of \cite{BU}. In the present work, we show that the ELEPs form a single tree corresponding to the Rosenberger equation R2. 
It is known (see \cite{Ro}) that the other two equations, R4 and R5, do not have coprime solutions and so do not appear in the type of study we undertake here or in \cite{ACLEQI}. So, in that sense, the tree of ELEPs forms the case that was curiously missing  in \cite{ACLEQI}. 

We should also remark that it is not surprising that there are relatively fewer equable parallelograms on the Eisenstein lattice than there are on the 
integer  lattice. It was already observed that for triangles, up to Euclidean transformations, there are only two equable triangles on the Eisenstein lattice, while there are five on the integer  lattice; see \cite{ACMon,ACetel,AClte} and the Appendix in \cite{ACLEQI}.

Let us now describe our main results. We first show in Lemma~\ref{L:bcint} below that the sides of an ELEP are necessarily of the form $n\sqrt3$ with $n\in\N$. Throughout this paper we will denote the side lengths $a\sqrt3$ and $b\sqrt3$. Note that an ELEP is completely determined, up to a Euclidean motion, by the integers $a,b$. Indeed, if $\theta$ denotes one of the angles between the sides, then the area is $3ab\sin\theta$ and so by equability, 
$\sin\theta=2(a+b)/\sqrt3ab$, which is determined by $a$ and $b$. Notice, incidentally, that from $\sin\theta=2(a+b)/\sqrt3ab$, there is no ELEP for which $\sin\theta$ is rational. In particular, there is no rectangular ELEP, nor any ELEP with angle $\pi/6$.

Our main aim in this paper is to study the values of $a,b$ for which an ELEP exists with sides $a\sqrt3,b\sqrt3$. 
In Section \ref{S:suff} we prove the following criterion, which is the exact analogue of \cite[Theorem~1]{ACLEQI} in that it can be rephrased as follows: the square of the product of the sides minus the square of the perimeter is a square.

\begin{theorem}\label{T:suff}
Given positive integers $a,b$, an Eisenstein lattice equable parallelogram with sides $a\sqrt3,b\sqrt3$ exists if and only if  $9a^2 b^2 -12(a+b)^2$ is a square.
\end{theorem}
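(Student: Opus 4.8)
The plan is to encode everything in the ring of Eisenstein integers $\Z[\omega]$. I would place one vertex of the parallelogram at the origin, so that the two sides emanating from it are vectors $\zeta,\eta\in\Z[\omega]$. Writing $\zeta=p+q\omega$, the norm $N(\zeta)=\zeta\bar\zeta=p^2-pq+q^2$ equals the squared length $|\zeta|^2$, so the side-length conditions become $N(\zeta)=3a^2$ and $N(\eta)=3b^2$. The single quantity controlling both the area and the angle is the product $\bar\zeta\eta$: if I write $\bar\zeta\eta=m+n\omega$ with $m,n\in\Z$, then $\mathrm{Re}(\bar\zeta\eta)=m-\tfrac n2$ is the dot product $\zeta\cdot\eta$, while $\mathrm{Im}(\bar\zeta\eta)=\tfrac{\sqrt3}{2}n$ is the signed area of the parallelogram. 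Since the perimeter is $2\sqrt3(a+b)$, equability reads $\tfrac{\sqrt3}{2}|n|=2\sqrt3(a+b)$, i.e.\ $|n|=4(a+b)$. Establishing this reformulation is the first step.

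For the ``only if'' direction, suppose an ELEP exists. Then $\zeta,\eta$ satisfy $N(\zeta)N(\eta)=9a^2b^2$, and by multiplicativity $N(\bar\zeta\eta)=m^2-mn+n^2=9a^2b^2$. The identity $m^2-mn+n^2=(m-\tfrac n2)^2+\tfrac34 n^2$ together with $|n|=4(a+b)$ gives $(m-\tfrac n2)^2=9a^2b^2-12(a+b)^2$. As $n=\pm4(a+b)$ is even, $m-\tfrac n2$ is an integer, so $9a^2b^2-12(a+b)^2$ is a perfect square. This direction is short once the setup is in place.

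For the converse, assume $9a^2b^2-12(a+b)^2=c^2$ with $c\ge0$ an integer. I would run the above computation in reverse: set $n=4(a+b)$ and $m=2(a+b)+c$, so that $w=m+n\omega$ has $N(w)=c^2+12(a+b)^2=9a^2b^2$ and $\omega$-coefficient exactly $4(a+b)$. The perfect-square hypothesis is precisely what makes $m$ an integer, hence what makes such a $w$ exist. It then remains to factor $w=\bar\zeta\eta$ with $N(\zeta)=3a^2$ and $N(\eta)=3b^2$; the vectors $\zeta,\eta$ will then span a lattice parallelogram with sides $a\sqrt3,b\sqrt3$ whose area $\tfrac{\sqrt3}2\cdot4(a+b)$ equals its perimeter, and whose non-degeneracy is guaranteed by $n\neq0$.

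The main obstacle is this last factorization, which I would handle using unique factorization in $\Z[\omega]$. It suffices to exhibit a divisor $\delta\mid w$ with $N(\delta)=3a^2$, since then $\eta=w/\delta$ automatically has norm $3b^2$ and I may take $\zeta=\bar\delta$. Since $3a^2=N(\lambda a)$, where $\lambda=1+2\omega$ is the ramified prime over $3$ (so $N(\lambda)=3$), the integer $3a^2$ is a norm, and the only question is whether the requisite prime powers sit inside $w$. I would check this prime by prime: at $\lambda$ and at the inert primes $p\equiv2\pmod3$ the needed exponent is forced and is automatically at most its exponent in $w$; at a split prime $p=\pi\bar\pi$ one must choose $v_\pi(\delta),v_{\bar\pi}(\delta)$ summing to $2v_p(a)$ subject to $v_\pi(\delta)\le v_\pi(w)$ and $v_{\bar\pi}(\delta)\le v_{\bar\pi}(w)$, and the equality $v_\pi(w)+v_{\bar\pi}(w)=v_p(N(w))=2v_p(a)+2v_p(b)\ge2v_p(a)$ guarantees a valid choice. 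Assembling these local choices produces $\delta$. The one point deserving care is keeping the conjugation bookkeeping straight (conjugation swaps $\pi\leftrightarrow\bar\pi$ and fixes $\lambda$ up to a unit), since that is what makes the norms of $\zeta$ and $\eta$ come out as $3a^2$ and $3b^2$ rather than being interchanged or skewed.
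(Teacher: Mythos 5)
Your proof is correct, and it takes a genuinely different route from the paper's. The paper argues geometrically: necessity comes from Heron's formula applied to the triangle cut off by a diagonal (Lemma~\ref{L:diag}, which also yields the diagonal formula and $a+b\equiv 0\pmod 3$, both reused later), and sufficiency is proved by first building a Euclidean triangle with sides $a\sqrt3,b\sqrt3$ and the appropriate diagonal, then invoking the realizability criterion of \cite{ACMon} (Theorem~\ref{T:Monthly}) as a black box to embed that triangle in $\Z[\omega]$. You instead work entirely inside the ring $\Z[\omega]$: equability becomes the condition that the $\omega$-coefficient $n$ of $\bar\zeta\eta$ equals $\pm 4(a+b)$, and then $9a^2b^2-12(a+b)^2=\bigl(m-\tfrac n2\bigr)^2$ falls out of multiplicativity of the norm, so your necessity argument needs no Heron's formula at all; for sufficiency you build $w=m+n\omega$ of norm $9a^2b^2$ and split off a divisor $\delta$ of norm $3a^2$ by a prime-by-prime argument in the UFD $\Z[\omega]$. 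That local argument checks out: for inert $p\equiv 2\pmod 3$ one has $2v_p(w)=v_p(9a^2b^2)$, so $v_p(w)=v_p(a)+v_p(b)\ge v_p(a)$; at the ramified prime $\lambda$ one has $v_\lambda(w)=v_3(9a^2b^2)=2+2v_3(a)+2v_3(b)$, which exceeds the required exponent $v_3(3a^2)=1+2v_3(a)$; and at split primes the freedom to distribute exponents between $\pi$ and $\bar\pi$ works exactly as you describe, since they sum to at least $2v_p(a)$. In effect you are re-proving, in the special case needed here, the embedding criterion that the paper imports from \cite{ACMon}. What the paper's route buys is brevity and byproducts that feed into Theorem~\ref{T:345} and the later sections; what your route buys is a self-contained proof with an explicit lattice realization, at the cost of developing the splitting of primes in $\Z[\omega]$.
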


\begin{corollary}\label{C:norhombus} There is no Eisenstein lattice equable rhombus.
\end{corollary}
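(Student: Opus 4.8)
The plan is to specialise Theorem~\ref{T:suff} to the case $a=b$ and then show that the resulting Diophantine condition has no solution. A rhombus is precisely a parallelogram all of whose sides are equal, so an Eisenstein lattice equable rhombus is an ELEP with $a=b$. Substituting $b=a$ into the criterion of Theorem~\ref{T:suff}, existence would require
\[
9a^4-12(2a)^2 = 9a^4-48a^2 = a^2\bigl(9a^2-48\bigr)
\]
to be a perfect square. Since $a^2$ is already a perfect square, and since $a^2\mid N^2$ forces $a\mid N$ for integers, this product is a perfect square if and only if $9a^2-48$ is a perfect square (in particular it must be nonnegative).

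It then remains to show that $9a^2-48$ is never a perfect square for $a\in\N$. First I would suppose $9a^2-48=k^2$ for some integer $k\ge 0$. Reducing modulo $3$ gives $k^2\equiv 0\pmod 3$, since both $9a^2$ and $48$ are divisible by $3$; hence $3\mid k$. Writing $k=3m$ and substituting yields $9a^2-48=9m^2$, that is, $9(a^2-m^2)=48$. This is impossible, because the left-hand side is divisible by $9$ while $48$ is not. This contradiction shows that no admissible $a$ exists, so by the first paragraph there is no Eisenstein lattice equable rhombus.

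I expect no serious obstacle here; the only point requiring a little care is the reduction in the first paragraph, namely the justification that $a^2(9a^2-48)$ is a square exactly when $9a^2-48$ is. One could instead avoid this step entirely and argue directly from $a^2(9a^2-48)=N^2$, or recast the condition $9a^2-48=k^2$ as $(3a-k)(3a+k)=48$ and rule out the factor pairs; but the modulo-$3$ argument is the cleanest route and makes the divisibility obstruction transparent.
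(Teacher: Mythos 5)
Your proof is correct and takes essentially the same route as the paper's: both reduce the rhombus case to the condition that $9a^2-48$ be a perfect square (cancelling the factor $a^2$) and then kill it with a congruence obstruction at the prime $3$ --- the paper states that $6$ is not a quadratic residue modulo $9$, which is exactly your mod-$3$-then-divide-by-$9$ argument in different words. The only cosmetic difference is that the paper invokes Lemma~\ref{L:diag} directly rather than Theorem~\ref{T:suff}.
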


In Section \ref{S:345} we use Rosenberger's Theorem on generalised Markov equations to prove  the following result.

\begin{theorem}\label{T:345}
 The set $\mathcal T$ of ordered pairs $(a,b)$ of positive integers $a< b$, for which $9a^2 b^2 -12(a+b)^2$ is a square, is the set
\begin{align*}
\mathcal T =\{2(q, r) \in\N^2 \ | \ &\text{there exist unique odd, coprime, positive integers}\  s,t \\
&\text{such that}\ \
s^2 + 3t^2 + 2q^2 = 6stq
\ \ \text{and}\ \  q \le r =3st-q\}.
\end{align*}
Furthermore, if $(a,b)=2(q, r)\in \mathcal T$, then $ab= 2(s^2+3t^2)$ and $a+b=6st$. Moreover, $q,r$ are coprime; in particular, 
$3\nmid ab$.
\end{theorem}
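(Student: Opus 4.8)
The plan is to prove the set equality by two inclusions, treating the three closing assertions ($ab=2(s^2+3t^2)$, $a+b=6st$, and the coprimality of $q,r$ with $3\nmid ab$) as consequences once the pair $(s,t)$ has been produced. For the inclusion ``parametrized set $\subseteq\mathcal T$'' I would argue by direct computation. Given odd, coprime, positive $s,t$ with $s^2+3t^2+2q^2=6stq$, put $a=2q$ and $b=2r=2(3st-q)$; then $a+b=2(q+r)=6st$ and, since the Markov relation rearranges to $2qr=s^2+3t^2$, also $ab=4qr=2(s^2+3t^2)$. Substituting,
$$9a^2b^2-12(a+b)^2=36(s^2+3t^2)^2-432s^2t^2=36(s^2-3t^2)^2,$$
which is a square, so $(a,b)\in\mathcal T$. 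The strict inequality $q<r$ (hence $a<b$) comes for free: $q=r$ would force $3st=2q$ and $s^2+3t^2=2q^2$, and the first of these is impossible for $s,t$ odd. This simultaneously shows the two ``furthermore'' relations hold for every parametrized pair.

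The substance is the reverse inclusion $\mathcal T\subseteq$ parametrized set. Starting from $9a^2b^2-12(a+b)^2=c^2$, I would first extract the divisibility skeleton: reducing modulo $3$ gives $3\mid c$ and then $3\mid(a+b)$, and writing $c=3c'$, $a+b=3k$ recasts the hypothesis as $c'^2+3(2k)^2=(ab)^2$, i.e.\ a point on the conic $X^2+3Y^2=Z^2$. The heart of the proof is to convert a primitive solution of this conic into a solution of a generalised Markov equation and to identify which one. I expect the decisive input here to be Rosenberger's Theorem: among the six equations only R2 is compatible with the constraints forced by the derivation, while R4 and R5 (which have no coprime solutions) are excluded; matching to R2 is what simultaneously forces $a,b$ to be \emph{even} and delivers $s,t$. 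Concretely, writing $a=2q$, $b=2r$, $c=12c'''$ and $m=(q+r)/3$, the conic relation becomes $(qr-c''')(qr+c''')=3m^2$ with $st=m$, and one must show that, up to the unavoidable single factor of $3$ and a controlled common divisor, the two factors are $qr+c'''=s^2$ and $qr-c'''=3t^2$. This is the classical ``coprime factors whose product is a square are themselves squares'' argument, but the bookkeeping of $\gcd(qr-c''',qr+c''')$ together with the lone factor of $3$ is delicate; I would run it through $\Z[\omega]$, where $3$ ramifies and unique factorisation is available, to pin down coprime $s,t$. I expect this factorisation-and-descent, entangled with the parity determination, to be the main obstacle.

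Once coprime $s,t$ with $a+b=6st$ and $ab=2(s^2+3t^2)$ are in hand, the Markov equation $s^2+3t^2+2q^2=6stq$ follows by reversing the algebra of the first paragraph, and the remaining claims are routine. Reducing the Markov equation modulo $2$ forces $s\equiv t\pmod2$, and primitivity then makes both \emph{odd}. Using $s^2+3t^2=2qr$ and $q+r=3st$, a prime-by-prime argument gives $\gcd(q,r)=1$, and a short congruence modulo $3$ (using $\gcd(s,t)=1$) gives $3\nmid s$, hence $3\nmid s^2+3t^2$ and $3\nmid ab$. Finally, uniqueness of $(s,t)$ is elementary: $s^2+3t^2=ab/2$ and $st=(a+b)/6$ determine $\{s^2,3t^2\}$ as the two roots of a fixed quadratic, so the positive pair $(s,t)$ is uniquely determined; alternatively it is immediate from the tree structure furnished by Rosenberger's Theorem.
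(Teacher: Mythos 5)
Your forward inclusion, uniqueness argument, and treatment of the oddness of $s,t$ are all sound and essentially match the paper. But the core of the reverse inclusion has a genuine gap, and as sketched it is circular. You write ``writing $a=2q$, $b=2r$, $c=12c'''$'' and then factor $(qr-c''')(qr+c''')=3m^2$ --- but the evenness of $a$ and $b$ is one of the main things to be \emph{proved}, not an available reduction. You acknowledge that ``matching to R2 is what simultaneously forces $a,b$ to be even,'' yet your concrete plan never derives any equation of the Rosenberger form $ax^2+by^2+cz^2=dxyz$ to which the theorem could be applied; Rosenberger's Theorem is invoked only as an expectation. Meanwhile, the factorisation step you defer to $\Z[\omega]$ (``the main obstacle'') is exactly where the difficulty lives: the splitting $qr+c'''=s^2$, $qr-c'''=3t^2$ is \emph{equivalent} to knowing the multiplier equals $2$, and there is no way to establish it by gcd bookkeeping alone, since the gcd control you would need presupposes facts (such as $\gcd(q,r)=1$) that are themselves conclusions of the theorem.

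The paper's proof supplies precisely the mechanism your sketch is missing, and in the opposite logical order. It first applies the classical parametrisation of $x^2+3y^2=z^2$ (Lemma 2, quoted from the literature, which plays the role of your $\Z[\omega]$ factorisation) to get $ab=k(s^2+3t^2)$ and $a+b=3kst$ with an \emph{unknown} multiplier $k$ and coprime $s,t$. Since $a$ is then a root of $X^2-3kstX+k(s^2+3t^2)$, one gets $ks^2+3kt^2+a^2=3kast$; writing $k=fg^2$ with $f$ square-free and descending ($f\mid a$, then $g\mid a/f$, so $a=fgq$) produces the genuine Markov--Rosenberger equation $s^2+3t^2+fq^2=3fg\,stq$. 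Only now does Rosenberger's Theorem enter: since a coefficient $3$ occurs, the equation must be R2, forcing $f=2$, $g=1$, hence $k=2$ and $a=2q$. Evenness of $a,b$, the relations $ab=2(s^2+3t^2)$, $a+b=6st$, and the identification with R2 all fall out of this single step; they are outputs of Rosenberger, not inputs to a factorisation. One further small slip: your claim that $3\nmid s$ follows ``using $\gcd(s,t)=1$'' is not right (e.g.\ $s=3,t=1$ are coprime); modulo $3$ the Markov equation only gives $3\mid s\Leftrightarrow 3\mid q$, so the correct source is the already-established coprimality of $q$ and $r$ together with $q+r=3st$, which is how the paper argues.
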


Theorems \ref{T:suff} and \ref{T:345} provide a classification of ELEPs up to Euclidean motions. 
In Section \ref{S:forest} we describe how all ELEPs can be derived from a root example, $(a,b)=(2,4)$, by successive applications of four functions. This gives the tree of ELEPs; see Figure \ref{F:tree}.

\begin{figure}[h]
\begin{tikzpicture}[scale=.6]
\def\r{1.732};

\draw [fill, blue!20] (0,0)
   -- (0, -4*\r/2)  
  -- (6,0) 
  -- (6,4*\r/2) 
 -- cycle;

\draw [thick, blue] (0,0)
   -- (0, -4*\r/2)  
  -- (6,0) 
  -- (6,4*\r/2) 
  -- cycle;

\foreach \i in {0,...,6}
\foreach \j in {-2,...,2}
\draw (\i,\j*\r) circle (.06);
\foreach \i in {0,...,7}
\foreach \j in {-2,...,1}
\draw (\i-1/2,\r*\j+\r/2) circle (.06);

 \draw [->] (0,-4) -- (0,4.3);
 \draw [->] (-1,-2*\r) -- (7,-2*\r);

  \end{tikzpicture}
\caption{The root ELEP, $(a,b)=(2,4)$}\label{F:root}
\end{figure}
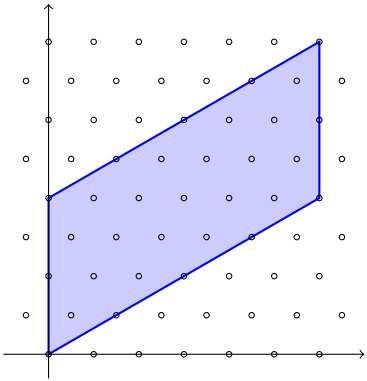

The paper \cite{ACLEQI} included a study of equable parallelograms, with vertices on the integer lattice, that have a pair of horizontal sides.
No such parallelogram exists on the Eisenstein lattice, because of Lemma~\ref{L:bcint}. 
Instead, in Section~\ref{S:horh},  we classify the ELEPs that have a horizontal diagonal. We show in Section~\ref{S:vertd} that there is no 
ELEP with a vertical diagonal. Finally, in Section~\ref{S:verts}, we classify the ELEPs with a vertical side, which turns out to be the same as the class of  ELEPs with a side of length $2\sqrt3$ or $4\sqrt3$. 

\begin{notation} In this paper, we employ the term \emph{positive} in the strict sense. So $\N=\{n\in\Z \ |\ n>0\}$.
\end{notation}


\section{Proof of Theorem \ref{T:suff}}\label{S:suff}

Recall that the square of the distance between any pair of points in the Eisenstein lattice is an integer. Indeed, if $z=x+y\omega\in \Z[\omega]$, then the square of the distance from the origin to $z$ is $x^2-xy+y^2$. 
If a triangle has its vertices on the Eisenstein lattice, then its area is of the form $\frac{\sqrt3}4 n$, where $n\in\N$. Indeed,
for the triangle with vertices $0,x+y\omega,z+w\omega$, with $x,y,z,w\in\Z$, the area is $\frac{\sqrt3}4 \vert xw-yz\vert $.

The following result was proved for triangles in \cite{ACetel}.

\begin{lemma}\label{L:bcint}
If $P$ is an equable polygon with vertices in $\Z[\omega]$, then the side lengths of $P$ are each of the form $\sqrt3n$, for some $n\in \N$.\end{lemma}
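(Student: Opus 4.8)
The plan is to combine the two observations recorded just before the statement --- that each squared distance between lattice points is an integer, and that a lattice triangle has area $\frac{\sqrt3}4 n$ with $n\in\N$ --- with the classical fact that square roots of distinct squarefree positive integers are linearly independent over $\Q$.

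First I would note that every side of $P$ is a difference of two of its vertices, hence is itself an element $z=x+y\omega$ of $\Z[\omega]$, so its squared length $m=x^2-xy+y^2$ is a positive integer. Decomposing $P$ into lattice triangles and summing the areas recalled above, the area of $P$ equals $\frac{\sqrt3}4 N$ for some $N\in\N$ (positive, since the area of a genuine polygon is positive). Writing the side lengths as $\sqrt{m_1},\dots,\sqrt{m_k}$, equability becomes
\[
\sqrt{m_1}+\cdots+\sqrt{m_k}=\tfrac{\sqrt3}4\,N.
\]

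Next I would pass to squarefree parts: write $m_i=f_i^2 d_i$ with $f_i\in\N$ and $d_i$ squarefree, so $\sqrt{m_i}=f_i\sqrt{d_i}$, and collect terms sharing a common squarefree part. Setting $c_d=\sum_{i:\,d_i=d}f_i$, the displayed identity reads $\sum_d c_d\sqrt d=\tfrac N4\sqrt3$, the sum running over the finitely many squarefree integers $d$ that occur, and each coefficient $c_d$ is either $0$ or a positive integer. The key step is then to invoke the theorem that, over distinct squarefree positive integers $d$, the numbers $\sqrt d$ (including $\sqrt1=1$) are linearly independent over $\Q$. Comparing the two sides forces every coefficient $c_d$ with $d\ne 3$ to vanish, while $c_3=N/4$. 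Since $c_d>0$ as soon as some side has squarefree part $d$, no side can have squarefree part other than $3$; that is, each $m_i=3f_i^2$, so each side length equals $\sqrt3\,f_i$ with $f_i\in\N$, as required.

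The main obstacle is the linear independence of the $\sqrt d$ over $\Q$; this is the only non-elementary ingredient, and I would either cite it or supply the short field-degree/induction argument, and check that it matches the tool used for the triangle case in \cite{ACetel}. Everything else is bookkeeping: the only points needing care are that $N>0$ and that the regrouping by squarefree part is legitimate, both of which are immediate.
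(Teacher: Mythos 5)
Your proof is correct and takes essentially the same approach as the paper's: both turn equability into a linear relation among square roots of integers (using that squared distances between points of $\Z[\omega]$ are integers and that the area lies in $\frac{\sqrt3}4\N$), and both close with the no-unexpected-rational-relations theorem for square roots, which is precisely the content of the sources \cite{art,Yuan} that the paper cites. The only difference is packaging: the paper multiplies the perimeter by $\sqrt3$ so as to apply the form ``a rational sum of square roots of integers has each term rational'' and then divides out the factor of $3$ at the end, whereas you apply the equivalent linear-independence statement for square roots of distinct squarefree integers directly to the equation perimeter $=$ area.
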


\begin{proof}
Since the area of any triangle with  vertices in $\Z[\omega]$ is of the form $\frac{\sqrt3}4n$, for some $n\in \N$,  the same is true for the area of $P$. Suppose $P$ has sides $s_1,\dots,s_n$, whose squares are therefore integers. The equable hypothesis gives  $\sum_{i=1}^n s_i\in\frac{\sqrt3}4\N$, which implies
 \[
 \sqrt{3s_1^2}+\dots+\sqrt{3s_n^2}=\sqrt{3}\sum_{i=1}^n s_i \in\Q.
 \]
But it is well known that if $\sum_{i=1}^n\sqrt{m_i}$ is rational for integers $m_1,\dots,m_n$, then $\sqrt{m_i}$ is rational for each $i$; see for example \cite{art} or \cite{Yuan}. Thus $\sqrt{3s_i^2}$ is rational for each $i$. Hence, as $3s_1^2,\dots,3s_n^2$ are integers, it follows that $\sqrt{3s_1^2},\dots,\sqrt{3s_n^2}$ are also integers. So the side lengths  are each of the form $\frac{n}{\sqrt3}$, for some $n\in \N$. Thus, since the squares of the side lengths are integers, the required result follows.
 \end{proof}

\begin{lemma}\label{L:diag}
Suppose an ELEP $P$ has  sides $a\sqrt3, b\sqrt3$, where $a,b\in\N$. Then $a+b\equiv 0 \pmod3$ and the lengths of the diagonals of $P$ are given by the following formula:
\[
d^2=3(a^2  +  b^2)\pm 2\sqrt{9a^2 b^2 -12(a+b)^2}.
\]
In particular,  $9a^2 b^2 -12(a+b)^2$ is a square.
\end{lemma}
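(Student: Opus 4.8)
The plan is to compute the two diagonals directly via the law of cosines and then exploit the integrality of squared distances on the Eisenstein lattice. Let $\theta$ denote the interior angle between two adjacent sides of $P$. First I record the equability relation: the perimeter $2\sqrt3(a+b)$ equals the area $3ab\sin\theta$, which gives $\sin\theta = 2(a+b)/(\sqrt3\,ab)$, exactly as in the Introduction. Consequently
\[
\cos^2\theta = 1 - \frac{4(a+b)^2}{3a^2b^2} = \frac{3a^2b^2-4(a+b)^2}{3a^2b^2},
\]
and, taking $\theta$ to be the acute angle, $\cos\theta = \sqrt{3a^2b^2-4(a+b)^2}/(\sqrt3\,ab)$. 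In particular, realizability of $P$ forces $3a^2b^2\ge 4(a+b)^2$, so the radicand is nonnegative.

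Next I would place one vertex at the origin and write the two adjacent side vectors as $u,v$, with $|u|=a\sqrt3$, $|v|=b\sqrt3$ and enclosed angle $\theta$. The two diagonals are then $u+v$ and $u-v$, so the law of cosines gives
\[
d^2 = 3(a^2+b^2)\pm 6ab\cos\theta.
\]
Substituting the expression for $\cos\theta$ and simplifying $6ab\cos\theta = 2\sqrt3\,\sqrt{3a^2b^2-4(a+b)^2} = 2\sqrt{9a^2b^2-12(a+b)^2}$ yields the stated formula for $d^2$.

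The crucial observation is that each diagonal joins two opposite vertices of $P$, which are points of $\Z[\omega]$; hence, by the fact recalled at the start of this section, each $d^2$ is an integer. Since $3(a^2+b^2)\in\Z$ as well, it follows that $2\sqrt{9a^2b^2-12(a+b)^2}$ is an integer, and since $9a^2b^2-12(a+b)^2$ is itself an integer, it must be a perfect square, say $k^2$; this is the ``In particular'' assertion. To obtain $a+b\equiv 0\pmod 3$, I would reduce $9a^2b^2-12(a+b)^2=k^2$ modulo $3$ to get $3\mid k$, hence $9\mid k^2$; writing $k=3j$ and cancelling gives $3a^2b^2-4(a+b)^2=3j^2$, whose reduction mod $3$ forces $(a+b)^2\equiv 0\pmod 3$, so $3\mid(a+b)$.

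The computation is routine; the one step deserving a word of care is the passage from ``$2\sqrt{9a^2b^2-12(a+b)^2}\in\Z$'' to ``$9a^2b^2-12(a+b)^2$ is a perfect square'', which relies on the radicand being a priori an integer, so that its square root is either irrational or an integer. Beyond this bookkeeping I anticipate no genuine obstacle, the content being essentially the combination of the law of cosines with the lattice integrality fact.
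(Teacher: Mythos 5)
Your proof is correct, and it reaches the same conclusions from the same two pillars (equability pins down the diagonal, and integrality of squared Eisenstein distances forces the radicand to be a square), but the execution differs from the paper's in two respects worth noting. First, where you combine the law of cosines with the equability relation $\sin\theta = 2(a+b)/(\sqrt3\,ab)$ to write $d^2 = 3(a^2+b^2) \pm 6ab\cos\theta$ directly, the paper applies Heron's formula to the triangle with sides $a\sqrt3$, $b\sqrt3$, $d$ cut off by a diagonal, turning equability into the single Diophantine equation $-9(a^2-b^2)^2 + 6(a^2+b^2)d^2 - d^4 = 48(a+b)^2$, which it then solves as a quadratic in $d^2$. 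Second, the order of deductions is reversed: the paper reads $3 \mid d^2$, and from it $3 \mid (a+b)$, off that polynomial identity \emph{before} solving for $d^2$, whereas you first establish that $9a^2b^2-12(a+b)^2$ is a perfect square $k^2$ (from $d^2 \in \Z$, exactly as the paper does) and only then extract $3 \mid (a+b)$ by reducing $9a^2b^2-12(a+b)^2=k^2$ modulo $3$, dividing by $3$, and reducing again. Both arguments are sound; your trigonometric route makes it transparent that the two signs $\pm$ correspond to the two diagonals, while the paper's route keeps everything inside one identity and obtains the divisibility statement with no separate argument. Your explicit care at the step passing from the integrality of $2\sqrt{9a^2b^2-12(a+b)^2}$ to the radicand being a perfect square is warranted and is handled correctly.
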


\begin{proof}
Consider a diagonal of length $d$, so $d^2\in\N$.
By Heron's formula 
 \cite[Chap.~6.7]{OW}, the triangle with sides $a\sqrt3, b\sqrt3,d$  has area
\[
\sqrt{s(s-a\sqrt3)(s-b\sqrt3)(s-d)} ,
\]
where $s=\frac{\sqrt3a+\sqrt3b+d}2$ is the semi-perimeter.
Hence the equability hypothesis is 
\[
(\sqrt3a+\sqrt3b+d)(-\sqrt3a+\sqrt3b+d)(\sqrt3a-\sqrt3b+d)(\sqrt3a+\sqrt3b-d)=48(a+b)^2.
\]
 Expanding and rearranging the left hand side gives 
 \begin{equation}\label{E:abd}
  -9 (a^2  -  b^2)^2 + 6 (a^2 +b^2)d^2  -d^4=48(a+b)^2.
 \end{equation}
 In particular, the integer $d^2$ is divisible by 3, say $d^2=3D$. Then \eqref{E:abd} gives
\[ -3 (a^2  -  b^2)^2 + 6 (a^2 +b^2)D -3D^2=16(a+b)^2,
\]
and hence $a+b\equiv 0 \pmod3$, as required.
Furthermore, solving \eqref{E:abd} for $d^2$ gives
\[
d^2=3(a^2  +  b^2)\pm 2\sqrt{9a^2 b^2 -12(a+b)^2},
\]
as required. In particular, as $a,b,d^2$ are integers, $9a^2 b^2 -12(a+b)^2$ is a square.
\end{proof}

\begin{remark}
Suppose an ELEP $P$ has  sides $a\sqrt3,b\sqrt3$ and diagonals $d_1,d_2$. Then the above lemma gives
$d_1d_2=(a  +  b)\sqrt{48+9(a-b)^2}$. Indeed, by Lemma~\ref{L:diag},
\begin{align}
(d_1d_2)^2&=9(a^2  +  b^2)^2- 36a^2 b^2 +48(a+b)^2\notag\\
&=9(a^2  -  b^2)^2+48(a+b)^2=(a  +  b)^2(48+9(a-b)^2)\label{E:Bret},
\end{align}
as required.
We remark that the same formula can be deduced from Bretschneider's formula and the equable hypothesis, without the use of Lemma~\ref{L:diag}.
Notice that the 3-adic order, $\nu_3((a  +  b)^2(48+9(a-b)^2))$, of the right-hand-side of \eqref{E:Bret} is odd. Hence 
$\nu_3(d_1^2)\not\equiv \nu_3(d_2^2) \pmod 2$.
\end{remark}

\begin{proof}[Proof of Theorem \ref{T:suff}]
The necessity of the condition was shown in Lemma \ref{L:diag}. Therefore, assume that $a,b$ are positive integers such that $9a^2 b^2 -12(a+b)^2$ is a square.
Consider a triangle $T$ with sides $a\sqrt3,b\sqrt3$ and 
\[
d:=\sqrt{3(a^2  +  b^2)+ 2\sqrt{9a^2 b^2 -12(a+b)^2}}.
\]
Notice that such a triangle exists because $a\sqrt3,b\sqrt3 < d <(a+b)\sqrt3$, where the latter inequality holds as 
$3(a^2  +  b^2)+ 2\sqrt{9a^2 b^2 -12(a+b)^2} <3(a+b)^2$ since $2\sqrt{9a^2 b^2 -12(a+b)^2} <6ab$.
Let $\theta$ denote the angle between sides of length $a\sqrt3,b\sqrt3$ and note that $\theta$ is obtuse since $d^2\ge 3a^2+3b^2$. Therefore
\[
d^2=3a^2+3b^2-6ab\cos\theta=3a^2+3b^2+2\sqrt{9a^2b^2-9a^2b^2\sin^2\theta}.
\]
So, from the definition of $d$, we have $9a^2b^2\sin^2\theta=12 (a+b)^2$, thus $3ab\sin\theta= 2\sqrt3(a+b)$. But since  twice the area of $T$ is $3ab\sin\theta$, the area of $T$ is then $\sqrt3(a+b)$. Now consider the parallelogram $P$ made from two copies of $T$. From what we have just seen, $P$ is equable. It remains to show that $P$ can be realized on the Eisenstein lattice, or equivalently, that  $T$ can be realized on the Eisenstein lattice. To do this, we employ the following result.

\begin{theorem}[\cite{ACMon}]\label{T:Monthly}
A planar triangle $T$  is realizable on the Eisenstein lattice if and only if the following three conditions hold:
\begin{enumerate}
\item
the area of $T$ is of the form $\frac{\sqrt3}4 n$, where $n\in\N$, 
\item the squares of the side lengths of $T$ are integers,
\item one of the side lengths of $T$  is of the form $r\sqrt{t}$, where $r, t\in\N$  and  $t$ has no prime divisors congruent to $2 \pmod 3$.
\end{enumerate}
\end{theorem}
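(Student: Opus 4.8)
The plan is to prove both directions of the equivalence by realising the plane as $\mathbb{C}$ and the Eisenstein lattice as $\Z[\omega]$, and exploiting the norm $N(x+y\omega)=x^2-xy+y^2$, which is multiplicative and equals the squared distance to the origin. I would use two standard facts about this form: a positive integer is represented by it precisely when every prime $p\equiv 2\pmod 3$ occurs to an even power (equivalently, it is a norm from $\Z[\omega]$), and $\Z[\omega]$ is a UFD in which such primes stay inert, primes $\equiv 1\pmod 3$ split, and $3$ ramifies. Necessity is then quick: after translating one vertex to $0$ the triangle is $0,z_1,z_2$ with $z_i\in\Z[\omega]$, its squared side lengths are the integers $N(z_1),N(z_2),N(z_1-z_2)$ (giving (2)), each of these is a norm so its squarefree part has no prime factor $\equiv 2\pmod 3$ (giving (3)), and its area is $\tfrac{\sqrt3}{4}|x_1y_2-x_2y_1|\in\tfrac{\sqrt3}{4}\N$ as recalled at the start of this section (giving (1)).

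For sufficiency, write the squared side lengths as $A,B,C$, with $C=r^2t$ the distinguished side from (3), so $C$ is a norm. The first step I would establish is that (1) and (2) then force $A$ and $B$ to be norms as well; this explains why it suffices to test condition (3) on a single side. Here the area hypothesis turns Heron's formula into $(B+C-A)^2+3n^2=4BC$ (and the two companion relations at the other vertices). Since an expression $X^2+3n^2$ has even $p$-adic valuation at every inert prime $p$ (as $-3$ is then a quadratic non-residue), the right-hand side $4BC$ has even valuation there; as $v_p(C)$ is even this forces $v_p(B)$, and symmetrically $v_p(A)$, to be even, so $A$ and $B$ are norms. The second step is placement: because $C$ is a norm I can realise the distinguished side as the segment from $0$ to some $\beta\in\Z[\omega]$ with $N(\beta)=C$. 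The apex $\gamma$ is then pinned down by $N(\gamma)=B$, $N(\gamma-\beta)=A$ and $\operatorname{Im}(\overline{\beta}\gamma)=\pm\tfrac{\sqrt3}{2}n$; writing $i\sqrt3=2\omega+1$ gives $\overline{\beta}\gamma=\tfrac{C+B-A\pm n}{2}\pm n\omega$, and reducing the symmetric Heron identity modulo $2$ shows $C+B-A\equiv n\pmod 2$, so in fact $\overline{\beta}\gamma\in\Z[\omega]$.

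The final step, and the one I expect to be the main obstacle, is to deduce that $\gamma=(\overline{\beta}\gamma)/\overline{\beta}$ itself lies in $\Z[\omega]$, i.e.\ that $\overline{\beta}$ divides the lattice element $\overline{\beta}\gamma$ in the UFD $\Z[\omega]$. At an inert prime this is exactly where condition (3) pays off: the even valuations supplied by the first step guarantee $v_p(\gamma)\ge 0$. The delicate case is the split and ramified primes, where $v_\pi(\overline{\beta})$ and $v_{\overline{\pi}}(\overline{\beta})$ can be unequal, so that dividing out could a priori fail at one of $\pi,\overline{\pi}$; here one must use the integrality of $A$ and $B$ together with the freedom to choose $\beta$ among the elements of norm $C$ (and to pass to the reflected apex $\overline{\gamma}$) so that the valuations balance. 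This split-prime bookkeeping, rather than any geometric input, is the real content of the theorem; condition (3) is indispensable precisely because, without control of the primes $\equiv 2\pmod 3$, the apex generically fails to be a lattice point.
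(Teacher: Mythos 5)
This theorem is imported, not proved, in the paper: it is stated with a citation to \cite{ACMon} and used as a black box in the proof of Theorem~\ref{T:suff}, so there is no in-paper argument to compare yours against, and your proposal must stand on its own. Your necessity direction and the first two steps of sufficiency are correct (one small repair: for the inert prime $p=2$ the justification ``$-3$ is a quadratic non-residue'' is vacuous; cleaner is to note $X^2+3n^2=N(X+n+2n\omega)$ is itself a norm, hence has even valuation at every inert prime, $2$ included). The genuine gap is the final step, exactly where you flagged it: you assert that $\beta$ can be chosen among the elements of norm $C$, possibly passing to the reflected apex, ``so that the valuations balance,'' but you never prove that a balancing choice exists, and that existence claim is the entire sufficiency direction. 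The danger is real, and reflection alone does not save an arbitrary $\beta$: for the lattice triangle $0,\ 3-5\omega,\ 3+\omega$ one has $C=49$, $B=7$, $A=36$, $n=18$, and $\delta:=\overline{\beta}\gamma=19+18\omega=\pi^3$ (or its conjugate $\overline{\pi}^3$ for the reflected apex), where $7=\pi\overline{\pi}$ with $\pi=3+\omega$. The element $\beta=7$ has norm $49$, yet $\overline{\beta}=7=\pi\overline{\pi}$ divides neither $\pi^3$ nor $\overline{\pi}^3$, so with that choice your construction fails for both reflections; only associates of $\pi^2$ or $\overline{\pi}^2$ work. So $\beta$ must be adapted to $\delta$, and an existence proof is required.

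The gap closes with a short valuation argument obtained by reversing your order of construction; your own steps supply every ingredient. Define $\delta:=\frac{B+C-A+n}{2}+n\omega$ (or its conjugate) \emph{first}: your parity argument shows $\delta\in\Z[\omega]$, and your Heron identity $(B+C-A)^2+3n^2=4BC$ says precisely $N(\delta)=BC$. Now choose $\overline{\beta}$ to be a divisor of $\delta$ of norm exactly $C$. Such a divisor exists by a prime-by-prime count: at an inert prime $q$ one needs $v_q(C)/2\le v_q(\delta)=\tfrac12\left(v_q(B)+v_q(C)\right)$, which holds since $v_q(B)\ge 0$, and $v_q(C)$ is even precisely because of hypothesis (3) --- the only place (3) is used; at the ramified prime $\lambda$ (with $N(\lambda)=3$) one needs $v_3(C)\le v_\lambda(\delta)=v_3(B)+v_3(C)$; at a split prime $p=\pi\overline{\pi}$ one needs a decomposition $v_p(C)=e+f$ with $e\le v_\pi(\delta)$ and $f\le v_{\overline{\pi}}(\delta)$, which exists because $v_\pi(\delta)+v_{\overline{\pi}}(\delta)=v_p(B)+v_p(C)\ge v_p(C)$. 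Setting $\gamma:=\delta/\overline{\beta}\in\Z[\omega]$ and taking $\beta$ to be the conjugate of $\overline{\beta}$, you get $N(\beta)=C$, $N(\gamma)=N(\delta)/C=B$, and $N(\gamma-\beta)=B+C-2\operatorname{Re}(\delta)=A$, so $0,\beta,\gamma$ realizes $T$ by SSS. With this paragraph added your proof is complete; without it, the central assertion remains unproven.
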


Note that firstly, we saw above that  the area of $T$ is $\sqrt3(a+b)\in\sqrt3\N$. Secondly, the squares of the side lengths are $3a,3b,3(a^2  +  b^2)+ 2\sqrt{9a^2 b^2 -12(a+b)^2}$ which are all integers. Thirdly, the length $\sqrt3a$ has the form  $r\sqrt{t}$, where $r, t\in\N$  and  $t$ has no prime divisors congruent to $2 \pmod 3$. So by Theorem~\ref{T:Monthly}, $T$ has a realization on the Eisenstein lattice.
\end{proof}

\begin{proof}[Proof of Corollary \ref{C:norhombus}]
Suppose we had an Eisenstein lattice equable rhombus with side length $a$.
Then by Lemma~\ref{L:diag}, $9a^4 -48a^2$ is a square and hence $9a^2 -48$ is a square. But this is impossible as $6$ is not a quadratic residue modulo $9$.
\end{proof}


\section{Proof of Theorem \ref{T:345}}\label{S:345}

The aim of this section is to prove Theorem \ref{T:345}. We first show that the elements of the set $\mathcal T$ have the required property.
  Suppose that $(a,b)=2(q, r)\in \mathcal T$, where for coprime positive integers $s,t$ we have
 $ s^2 + 3t^2 + 2q^2 = 6stq$ and $q <r =3st-q$. 
  Hence $q,r$ are the two solutions of the quadratic equation
\begin{equation}\label{E:added}
 2u^2-6stu+ (s^2+3t^2)=0,
\end{equation}
 in $u$, and so $q+r=3st$ and $2qr=s^2+3t^2$. Thus we have
  \begin{align*}
9a^2b^2-12(a+b)^2&=4(9 \cdot4q^2r^2-12(q+r)^2)\\
&=4(9(s^2+3t^2)^2-12\cdot 3^2s^2t^2)=4\cdot 9(s^2-3t^2)^2,
 \end{align*}
which is a square, as required.

We now show that every solution is in $\mathcal T$.
The main tool we use  is Rosenberger's Theorem on generalised Markov equations. Recall that in \cite{Ro} Rosenberger considered equations of the form 
\begin{equation}\label{E:Ro}
ax^2+by^2+cz^2=dxyz,
\end{equation}
where $a,b,c$ are pairwise coprime positive integers with $a\le b\le c$ such that $a,b,c$ all divide $d$. We are only interested in positive integer solutions, that is, $x,y,z\in\N$, so we use the word \emph{solution} to mean positive integer solution. Rosenberger's remarkable result is that only 6 such equations have a solution and when such a solution exists, there are infinitely many solutions. We use the R1--R5 notation of \cite{BU}.

\begin{RT}[\cite{Ro}]
Equation \eqref{E:Ro} only has a solution in the following 6 cases:
\begin{enumerate}
\item[\textup{M:}] $x^2 + y^2 + z^2 = 3xyz$ (Markov's equation),
\item[\textup{R1:}] $x^2 + y^2 + 2z^2 = 4xyz$,
\item[\textup{R2:}]  $x^2 + 2y^2 + 3z^2 = 6xyz$,
\item[\textup{R3:}]  $x^2 + y^2 + 5z^2 = 5xyz$,
\item[\textup{R4:}]  $x^2 + y^2 + z^2 = xyz$,
\item[\textup{R5:}]  $x^2 + y^2 + 2z^2 = 2xyz$.
\end{enumerate}
\end{RT}

We will also require  the following classical result.  There are several proofs of this result; a direct, elementary proof is given in \cite{AD}.

\begin{lemma}\label{L:trips} Suppose that positive integers $x,y,z$ satisfy the equation $x^2+3y^2=z^2$. Then there exist $k\in\N$ and coprime $s,t\in\N$ such that 
\[
x=\frac{k}2\, |s^2-3t^2|,\ y=kst,\ z=\frac{k}2(s^2+3t^2),
\]
where $k$ is even if $s,t$ have different parity.
\end{lemma}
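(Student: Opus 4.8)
The plan is to prove the parametrization of solutions to $x^2 + 3y^2 = z^2$ by the standard descent/factorization method adapted to the quadratic form $x^2+3y^2$. First I would reduce to the primitive case: write $g=\gcd(x,y,z)$ and factor it out, so it suffices to handle solutions where $\gcd(x,y)=1$ (whence $\gcd(x,y,z)=1$); the general $k$ in the statement will absorb the common factor together with a possible extra factor of $2$ that arises from parity, which is why $k$ is merely required to be even precisely when $s,t$ have different parity. Rearranging as $3y^2 = z^2 - x^2 = (z-x)(z+x)$ is the natural starting move.

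The key step is to analyze the factorization $3y^2 = (z-x)(z+x)$ over $\Z$, controlling the $\gcd$ of the two factors. In the primitive case, $\gcd(z-x,z+x)$ divides $2\gcd(x,z)$, so it is $1$ or $2$ according to the parity of $x,z$; combined with the fact that exactly one of $z-x,z+x$ carries the factor of $3$, one obtains that up to these controlled common factors each factor is a perfect square times a unit. Writing $z-x$ and $z+x$ as (constant)$\times$(square) and solving the resulting linear system for $x$ and $z$ produces the expressions $x=\tfrac12|s^2-3t^2|$, $z=\tfrac12(s^2+3t^2)$, and $y=st$ in the primitive case, with $s,t$ coprime. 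A cleaner alternative I would consider is to work in the ring $\Z[\omega]$ of Eisenstein integers, where $x^2+3y^2 = (x+y\sqrt{-3})(x-y\sqrt{-3})$; since the paper is already set in the Eisenstein lattice, exploiting unique factorization there (being careful about the non-maximal order $\Z[\sqrt{-3}]$ versus the full ring of integers $\Z[\omega]$, which is exactly the source of the factor-of-$2$ subtlety) gives a conceptually transparent route.

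The main obstacle will be the careful bookkeeping of the factors of $2$ and the parity condition on $k$. The clause that $k$ must be even when $s,t$ have different parity is precisely what makes the formulas produce genuine integers (note $s^2-3t^2$ and $s^2+3t^2$ are even exactly when $s,t$ have the same parity, so the $\tfrac12$ is harmless only in that case). Getting the biconditional right — that every triple of the stated shape is a solution, and conversely every solution arises this way with a uniquely determined parity-compatible $k$ — requires tracking exactly when the $\tfrac12$ factors yield integers and when the extra factor of $2$ in $k$ is forced. The reverse direction (checking that the displayed $x,y,z$ satisfy $x^2+3y^2=z^2$) is a routine identity, $(s^2-3t^2)^2 + 3(2st)^2 = (s^2+3t^2)^2$, and I would dispatch it quickly; since the excerpt explicitly defers to the elementary proof in \cite{AD}, I would state the result with this descent argument sketched and cite \cite{AD} for the full computational verification.
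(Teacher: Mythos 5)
Your proposal cannot be compared line-by-line with the paper's argument for the simple reason that the paper has none: Lemma~\ref{L:trips} is stated as a classical result and its proof is deferred entirely to the citation \cite{AD}. Your sketch is, however, a correct outline of the standard elementary route, and all the load-bearing ideas are present and sound. The reduction to the primitive case is legitimate because $\gcd(x,y)=\gcd(x,y,z)$ for any solution (a common prime of $x,y$ divides $z^2$, hence $z$); in the primitive case one checks mod $4$ that ($x$ even, $y$ odd) is impossible, leaving the two cases ($x,z$ odd, $y$ even) and ($x,y$ odd, $z$ even), which after the factorization $3y^2=(z-x)(z+x)$, the gcd control $\gcd(z-x,z+x)\in\{1,2\}$, and the observation that exactly one factor carries the prime $3$ (both would force $3\mid\gcd(x,z)$ and then $3\mid y$), yield respectively $k=2$ with $s,t$ of opposite parity and $k=1$ with $s,t$ both odd --- exactly the parity clause in the statement. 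Your alternative via $\Z[\omega]$ is also viable, and your caveat is precisely the right one: $\Z[\sqrt{-3}]$ is a non-maximal order in which unique factorization fails, so one must factor in $\Z[\omega]$ and descend, and the index-$2$ issue reproduces the same factor-of-$2$ bookkeeping rather than avoiding it. As for what each approach buys: the paper's citation keeps the exposition short, while fleshing out your factorization argument (about half a page, essentially the case analysis above) would make the paper self-contained on this point; your concluding fallback of citing \cite{AD} for the details coincides with what the authors actually did.
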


Returning to the proof of Theorem \ref{T:345}, suppose that $a,b$ are positive integers with $a< b$ such that $9a^2 b^2 -12(a+b)^2$ is a square. So 
$a+b\equiv 0\pmod 3$ and $a^2 b^2 -12\left(\frac{a+b}3\right)^2$ is a square integer.
Let $z=ab,y=\frac{2(a+b)}3,x=\sqrt{a^2 b^2 -12\left(\frac{a+b}3\right)^2}$, so that
$x^2+3y^2=z^2$, and $y$ is even. So by Lemma~\ref{L:trips}, there exist $k'\in\N$ and coprime $s,t\in\N$ such that 
\[
x=\frac{k'}2\, |s^2-3t^2|,\ y=k'st,\ z=\frac{k'}2(s^2+3t^2),
\]
where $k'$ is even if $s,t$ have different parity. In particular, from $z$ and $y$, we have
\begin{equation}\label{E:py1}
ab= \frac{k'}2(s^2+3t^2), \qquad a+b=\frac{3k'}2st.
\end{equation}
Since $s,t$ are coprime,  $s,t$ are both odd if they have the same parity. But in this case, $k'$ must be even, by the second Equation in \eqref{E:py1}.
So  $k'$ is even in all cases; set $k'=2k$. Then from Equation~\eqref{E:py1}, we have
\begin{equation}\label{E:py2}
ab= k(s^2+3t^2), \qquad a+b=3kst.
\end{equation}
Hence $a,b$ are solutions to the quadratic equation
$X^2-3kst X+k(s^2+3t^2)=0$.
In particular, we have
\begin{equation}\label{E:xb2}
ks^2+3kt^2+a^2=3kast.
\end{equation}
Let $k=fg^2$, where $f$ is square-free.
From \eqref{E:xb2}, $f$ divides $a^2$ and hence $f$ divides $a$. Let $a=f\a$. Dividing \eqref{E:xb2} by $f$ gives
\begin{equation}\label{E:xb3}
g^2s^2+3g^2t^2+f\a^2=3fg^2 st\a.
\end{equation}
From \eqref{E:xb3}, $g^2$ divides $f\a^2$ and hence $g^2$ divides $\a^2$, and thus $g$ divides $\a$. Let $\a=g\g$.  Dividing \eqref{E:xb3} by $g^2$ gives
\begin{equation}\label{E:xb4}
s^2+3t^2+f\g^2=3fg\, st\g.
\end{equation}
Thus by Rosenberger's Theorem, using $x=s,y=\g,z=t$, there is only one possibility for \eqref{E:xb4} to have a solution, namely it is the  Equation~R2, with $f=2$ and $g=1$. Consequently, we have $k=2$ and $ a=2\g$. So Equation~\eqref{E:xb4} gives
\begin{equation}\label{E:xb5}
s^2+3t^2+2\g^2=6 st\g.
\end{equation}
In particular, $a$ is even and hence $b$ is even, say $b=2r$, by the second Equation in \eqref{E:py2}.

Notice that as $s^2 + 3 t^2 + 2 \g^2 = 6 s t \g$, then modulo $2$, this gives $s+t\equiv 0$, and so $s,t$ are both odd.

We now show that if $(a,b)=(2q,2r)$, then $q,r$ are coprime.
Rewriting 
\eqref{E:py2}, we have
\begin{equation}\label{E:py3}
ab= 2(s^2+3t^2), \qquad a+b=6st.
\end{equation}
Suppose $p$ is an odd prime divisor of $\gcd(a,b)$. Then $p^2$ divides $s^2+3t^2$ and $p$ divides $3st$, by Equation~\eqref{E:py3}. Thus $p$ divides $(s+3t)^2$ and $(s-3t)^2$, and so $p$ divides $s+3t$ and $s-3t$. Hence $p$ divides $2s$ and $6t$. Consequently, as $p$ is odd and $s,t$ are coprime, $p=3$ and furthermore,  $3$ divides $s$ and $3$ doesn't divide $t$.
Since $3$ divides $a$ and $b$, we have $9$ divides $s^2+3t^2$,  from the first Equation in \eqref{E:py3}. But this is impossible since $9$ divides $s^2$ and $3$ doesn't divide $t$. Hence $\gcd(a,b)$ is a power of $2$. Then, as $s,t$ are both odd, the second Equation in \eqref{E:py3} gives $\gcd(a,b)=2$, as required. 

Since $a+b=6st$ and $\gcd(a,b)=2$, neither $a$ nor $b$ is divisible by $3$. It remains to show that, given $a,b$, the integers  $s,t$ are unique.
Suppose that \eqref{E:py3} holds and that one has coprime positive integers $s',t'$ with 
\begin{equation}\label{E:py32}
ab= 2(s'^2+3t'^2), \qquad a+b=6s't'.
\end{equation}
 From the second equations in \eqref{E:py3} and \eqref{E:py32}, we have $s't'=s t$. From \eqref{E:py32}, we also have 
$s'^2 + 3 t'^2 + 2 \g^2 = 6 s' t' \g$,
where $a=2\g$, as before. Hence, by \eqref{E:py3} and \eqref{E:xb5}, $s'^2 + 3 t'^2 =s^2 + 3 t^2 $, so as $t'=st/s'$,
\[
s'^4 + 3 s^2t^2 =s'^2(s^2 + 3 t^2),
\]
and hence $(s'^2-s^2)(s'^2-3t^2)=0$. As $s'$ is a positive integer, it follows that $s'=s$ and thus $t'=t$.
This completes the proof of Theorem~\ref{T:345}.\hfill\qed

\begin{remark}\label{R:sdiv3} Since $a=2q$ is not divisible by $3$, neither is $q$.
So by \eqref{E:xb5}, $s$ is not divisible by $3$. 
\end{remark}


\section{Comments on Theorem \ref{T:345}}\label{S:345com}

Theorem \ref{T:345} identifies the set $\mathcal T$ of ordered pairs $(a,b)$ of possible side lengths of ELEPs, divided by $\sqrt3$, with $a< b$ and $a,b$ even; $a=2q$ and $b=2r$. It relates these pairs $(a,b)$ to pairs of coprime positive integers $s,t$ for which the equation
\[
s^2+3t^2+2u^2=6 stu
\]
is satisfied for both $u=q$ and $u=r$.
From given $s,t$, we have, from \eqref{E:added}, 
\begin{equation}\label{E:ab}
a=3st-\sqrt{9s^2t^2-2(s^2+3t^2)},\qquad b=3st+\sqrt{9s^2t^2-2(s^2+3t^2)}.
\end{equation}
Conversely, given $a,b$,  we have, from \eqref{E:py3}, 
 \[
s^2+3t^2=\frac{ab}2\ \text{and}\ s^2\cdot 3t^2=\frac{(a+b)^2}{12}
\]
and so
\begin{align}
s^2&=\frac{3a b +\sigma(a,b)  \sqrt{9a^2 b^2-12(a+b)^2}}{12},\label{E:s}\\
3t^2&=\frac{3a b -\sigma(a,b)  \sqrt{9a^2 b^2-12(a+b)^2}}{12},\label{E:t}
\end{align}
where $\sigma(a,b)=\pm 1$, and will now be explained. By Remark~\ref{R:sdiv3}, $s\not\equiv 0\pmod3$, so $s^2\equiv 1\pmod3$.
Notice that
\[
s^2=\frac{3a b +\sigma(a,b)  \sqrt{9a^2 b^2-12(a+b)^2}}{12}=qr +\sigma(a,b)  \sqrt{q^2 r^2-\frac{(q+r)^2}3}.
\]
By Theorem~\ref{T:345}, $a,b$ are not divisible by $3$. So, as $q+r=3st$, we have $qr\equiv -1\pmod3$. Hence,  since $\sigma(a,b)=1$ if and only if $s^2>3t^2$, we have the following result.

\begin{lemma}\label{L:sigma} Suppose $(a,b)\in \mathcal T$ with $a=2q$ and $b=2r$. Then 
\[
\sigma(a,b)\equiv -  \sqrt{q^2 r^2-\frac{(q+r)^2}3}\pmod3,
\]
and the following conditions are equivalent:
\begin{enumerate}
\item[\textup{(a)}] $\sigma(a,b)=1$,
\item[\textup{(b)}] $s^2>3t^2$,
\item[\textup{(c)}] $\sqrt{q^2 r^2-\frac{(q+r)^2}3} \equiv -1\pmod3$.
\end{enumerate}
\end{lemma}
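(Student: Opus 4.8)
The plan is to reduce everything modulo $3$, using the ingredients already assembled in the discussion preceding the statement. First I would record that the quantity under the radical is a genuine integer: writing $W=\sqrt{q^2r^2-\tfrac{(q+r)^2}{3}}$, the identity $s^2=qr+\sigma(a,b)\,W$ obtained just above, together with $\sigma(a,b)=\pm1$, forces $W=\sigma(a,b)(s^2-qr)\in\Z$, so that reduction of $W$ modulo $3$ is meaningful. I would also note that $W\neq0$: since $s\not\equiv0\pmod3$ by Remark~\ref{R:sdiv3} while $3t^2\equiv0\pmod3$, the two roots $s^2$ and $3t^2$ of \eqref{E:added} are distinct, so the sign $\sigma(a,b)$ is genuinely well defined.

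Next I would establish the displayed congruence. Reducing $s^2=qr+\sigma(a,b)\,W$ modulo $3$ and inserting the two facts already proved, namely $s^2\equiv1$ (from Remark~\ref{R:sdiv3}) and $qr\equiv-1$, gives $\sigma(a,b)\,W\equiv1-(-1)\equiv-1\pmod3$. Multiplying through by $\sigma(a,b)$ and using $\sigma(a,b)^2=1$ then yields $\sigma(a,b)\equiv -W\pmod3$, which is precisely the asserted formula $\sigma(a,b)\equiv-\sqrt{q^2r^2-\tfrac{(q+r)^2}{3}}\pmod3$.

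Finally I would dispatch the equivalences. The implication $\textup{(a)}\Leftrightarrow\textup{(b)}$ is essentially in hand: subtracting \eqref{E:t} from \eqref{E:s} gives $s^2-3t^2=\tfrac{1}{6}\,\sigma(a,b)\sqrt{9a^2b^2-12(a+b)^2}$, whose sign is exactly that of $\sigma(a,b)$, so $\sigma(a,b)=1$ if and only if $s^2>3t^2$. For $\textup{(a)}\Leftrightarrow\textup{(c)}$ I would substitute $\sigma(a,b)=1$ into the congruence $\sigma(a,b)\equiv-W\pmod3$ to obtain $W\equiv-1\pmod3$, and conversely; since $\sigma(a,b)\in\{\pm1\}$ and $W\equiv\pm1\pmod3$, this passage is an exact equivalence, and $W\equiv-1\pmod3$ is exactly statement $\textup{(c)}$.

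I expect no serious obstacle here: the computational heart of the lemma — the formula $s^2=qr+\sigma(a,b)\,W$ and the congruences $s^2\equiv1$, $qr\equiv-1\pmod3$ — has already been derived before the statement, so the proof reduces to a short piece of modular bookkeeping. The only point demanding a moment's care is the integrality and non-vanishing of $W$, which together guarantee that both the reduction modulo $3$ and the sign $\sigma(a,b)$ make sense; both follow at once from the already-established fact that $s$ is not divisible by $3$.
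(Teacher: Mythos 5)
Your proposal is correct and follows essentially the same route as the paper, which proves this lemma in the discussion immediately preceding its statement: the identity $s^2 = qr + \sigma(a,b)\sqrt{q^2r^2 - \tfrac{(q+r)^2}{3}}$, the congruences $s^2\equiv 1$ and $qr\equiv -1 \pmod 3$, and the defining property $\sigma(a,b)=1 \iff s^2>3t^2$. Your additional remarks on the integrality and non-vanishing of the radical (via $s\not\equiv 0\pmod 3$) are sound refinements of the same argument, not a different approach.
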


\begin{remark}\label{R:integers}
Suppose that an element $z=x'+y'\omega\in\Z[\omega]$ has length $\sqrt3n$, for some $n\in \N$. Then in complex numbers,
$z=x'-\frac{y'}2+\frac{\sqrt3y'}2i$ and
\[
3n^2= x'^2-x'y'+y'^2.
\]
Thus, if $n$ is even, then $x',y'$ are necessarily both even. Let $y'=2y$ and $x=x'-y$, so $z=x+y\sqrt3i$, where $x,y\in\Z$. 
In particular, this is the case for the sides of an ELEP, by Theorem~\ref{T:345}.
\end{remark}


\section{The tree of ELEPs}\label{S:forest}

Consider the set $\mathcal S$ of solutions $(s,t,u)$, with $s,t$ coprime, of the  Markov-Rosenberger equation given in 
\eqref{E:xb5} with $\g=u$:
\begin{equation}\label{E:u}
s^2+3t^2+2u^2=6 stu.
\end{equation}
Note that we are not assuming that $u<3st$. Following the presentation given in \cite{BU}, from a solution $x = (s,t,u)$ to 
\eqref{E:xb5}, one can generate three new solutions by
applying the involutions:
\begin{align*}
\phi_1(x)&= (6 tu-s,t,u)\\
\phi_2(x)&= (s,2su-t,u)\\
\phi_3(x)&= (s,t,3st-u) .
\end{align*}
The group of transformations of $\mathcal S$ generated by the maps $\phi_i$ is the free product of three copies of $\Z_2$, and this group acts transitively on $\mathcal S$.
Moreover, the  maps $\phi_i$ give the set $\mathcal S$ of solutions the structure of an infinite  binary tree: each solution is a vertex and two distinct solutions are connected by an edge if  one of the  maps $\phi_i$ sends one solution to the other.
The \emph{fundamental solution} has the smallest values of $s+t+u$; it is $(s,t,u)=(1,1,1)$. 

\begin{lemma}\label{L:fix} The fixed point sets of the maps $\phi_1,\phi_3$ are empty, and $(1,1,1)$ is the unique  fixed point of $\phi_2$.
\end{lemma}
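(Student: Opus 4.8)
The plan is to treat each involution separately, reducing the fixed-point condition to a single scalar equation and then exploiting either the coprimality of $s,t$ or their parity. First I would record a fact used throughout: every $(s,t,u)\in\mathcal S$ has $s$ and $t$ odd. Indeed, reducing \eqref{E:u} modulo $2$ gives $s^2+t^2\equiv 0$, so $s\equiv t\pmod 2$; since $\gcd(s,t)=1$ they cannot both be even, hence both are odd. Next I would write down the fixed-point equations directly from the formulas for the maps: a point $x=(s,t,u)$ is fixed by $\phi_1$ iff $6tu-s=s$, i.e.\ $s=3tu$; by $\phi_2$ iff $2su-t=t$, i.e.\ $t=su$; and by $\phi_3$ iff $3st-u=u$, i.e.\ $2u=3st$.

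For $\phi_1$: from $s=3tu$ we get $t\mid s$, so coprimality forces $t=1$, whence $s=3u$. Substituting into \eqref{E:u} yields $9u^2+3+2u^2=18u^2$, i.e.\ $7u^2=3$, which has no solution in $\N$; hence $\phi_1$ has no fixed point. For $\phi_2$: from $t=su$ we get $s\mid t$, so coprimality forces $s=1$, and then $t=u$. Substituting $s=1$, $t=u$ into \eqref{E:u} gives $1+3u^2+2u^2=6u^2$, i.e.\ $u^2=1$, so $u=1$ and $(s,t,u)=(1,1,1)$. Since $\phi_2(1,1,1)=(1,1,1)$, this is indeed a fixed point, and by the above it is the only one. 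For $\phi_3$: the condition $2u=3st$ is impossible, because its left side is even while $3st$ is odd, $s$ and $t$ both being odd; hence $\phi_3$ has no fixed point.

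The argument involves no genuine obstacle; it is entirely elementary once the parity of $s,t$ is in hand. The only points requiring a little care are the two coprimality deductions ($t\mid s\Rightarrow t=1$ and $s\mid t\Rightarrow s=1$) and the observation that $3st$ is odd, which is exactly what kills the fixed points of $\phi_3$ and, together with the two short computations above, confirms that the fundamental solution $(1,1,1)$ is the sole fixed point among the three maps.
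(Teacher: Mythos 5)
Your proof is correct and follows essentially the same route as the paper's: reduce each fixed-point condition to $s=3tu$, $t=su$, $2u=3st$ respectively, use coprimality to force $t=1$ (resp.\ $s=1$) and substitute into the Markov--Rosenberger equation, and kill $\phi_3$ by parity. The only difference is that you re-derive the fact that $s,t$ are both odd from the equation modulo $2$, whereas the paper invokes it as already established in Section~3; that is a harmless (indeed self-contained) addition, not a different argument.
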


\begin{proof}
If $(s,t,u)$ is a fixed point of $\phi_1$, then 
$s=3tu$. Hence $t=1$, as $s, t$ are coprime. Replacing in \eqref{E:u} gives the contradiction $3=7u^2$.
Similarly,
a fixed point $(s,t,u)$ of $\phi_3$ would have $2u=3st$, which is impossible as $s,t$ are odd.
If $(s,t,u)$ is a fixed point of $\phi_2$, we have
$t=su$, so $s=1$, as $s, t$ are coprime. Then \eqref{E:u} gives $u^2=1$ and so $t^2=1$. Thus finally $s=t=u=1$.
\end{proof}

In summary so far: the group $\Z_2\ast\Z_2\ast \Z_2$ generated by $\phi_1,\phi_2,\phi_3$ acts freely on $\mathcal S$ except at the fundamental solution $(s,t,u)=(1,1,1)$, which we can take as the root of the tree $\mathcal S$.

Having recalled Rosenberger's theory, we now describe how the solution tree $\mathcal S$ of the Markov-Rosenberger equation determine the induced structure on the set $\mathcal T$ of  ELEPs. 
Motivated by
\eqref{E:ab}, we define the map $\pi:\mathcal S \to \mathcal T$ by
\begin{equation}\label{E:pi}
\pi (s,t,u)=\left(3st-\sqrt{9s^2t^2-2(s^2+3t^2)},\ 3st+\sqrt{9s^2t^2-2(s^2+3t^2)}\right).
\end{equation}
The map $\pi$ is well defined and surjective by Theorem \ref{T:345}. Note that if $\pi (s,t,u)=(a,b)$, then by \eqref{E:ab} and \eqref{E:u}, either $a=2u$ or $b=2u$. Furthermore, trivially, $\pi \circ \phi_3(s,t,u)=(a,b)$; that is, $(s,t,u)$ and $ \phi_3(s,t,u)$ correspond to the same ELEP.
In other words, for each $(a,b)\in \mathcal T$, the pre-image $\pi^{-1}(a,b) $ consist of two points, which are interchanged by the involution $\phi_3$.

Consequently, as contraction of edges of a tree produces another tree,  we can form a tree structure on $\mathcal T$ by contracting each of the edges of $\mathcal S$ that are given by the map $ \phi_3$; see Figure \ref{F:contract}. This contraction turns vertices of degree 3 into vertices of degree 4. This is the case for all vertices except for the image of the fundamental solution, which has degree 3. This is the \emph{root} $(2,4)\in \mathcal T$. 
 The tree $\mathcal T$  is shown in Figure \ref{F:tree}. Here the elements $(a,b)$ are shown above the corresponding pairs $(s,t)$.

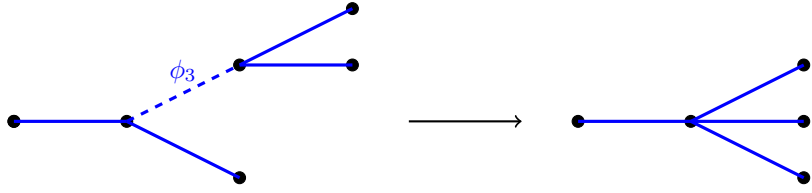
\begin{figure}[h]
\begin{tikzpicture}[scale=1.5][line cap=round,line join=round,>=triangle 45,x=1cm,y=.5cm]
\def\y{.5};
\def\d{5};
\draw [fill=black] (-1,0) circle (1.5pt);
\draw [fill=black] (0,0) circle (1.5pt);
\draw [fill=black] (1,\y) circle (1.5pt);
\draw [fill=black] (1,-\y) circle (1.5pt);
\draw [fill=black] (-1,0) circle (1.5pt);
\draw [fill=black] (2,2*\y) circle (1.5pt);
\draw [fill=black] (2,\y) circle (1.5pt);
\draw [line width=1.2pt,dashed,color=qqqqff] (0,0)-- (1,\y) node[midway,above] {$\phi_3$};
\draw [line width=1.2pt,color=qqqqff] (0,0)-- (1,-\y) ;
\draw [line width=1.2pt,color=qqqqff] (0,0)-- (-1,0) ;
\draw [line width=1.2pt,color=qqqqff] (1,\y)-- (2,2*\y) ;
\draw [line width=1.2pt,color=qqqqff] (1,\y)-- (2,\y) ;
\draw [->,line width=0.8pt] (2.5,0) -- (3.5,0);
draw [fill=black] (\d-1,0) circle (1.5pt);
\draw [fill=black] (\d,0) circle (1.5pt);
\draw [fill=black] (\d+1,-\y) circle (1.5pt);
\draw [fill=black] (\d-1,0) circle (1.5pt);
\draw [fill=black] (\d+1,\y) circle (1.5pt);
\draw [fill=black] (\d+1,0) circle (1.5pt);
\draw [line width=1.2pt,color=qqqqff] (\d,0)-- (\d+1,-\y) ;
\draw [line width=1.2pt,color=qqqqff] (\d,0)-- (\d-1,0) ;
\draw [line width=1.2pt,color=qqqqff] (\d,0)-- (\d+1,\y) ;
\draw [line width=1.2pt,color=qqqqff] (\d,0)-- (\d+1,0) ;
\end{tikzpicture}
\caption{Contraction of the $\phi_3$ edges}\label{F:contract}
\end{figure}

We now see how the maps $\phi_1,\phi_2,\phi_3$ on $ \mathcal S$ generate  maps on $\mathcal T$.
For each $i=1,2$,  maps $\v_i,\psi_i: \mathcal T\to \mathcal T$ can be naturally defined as follows. If $(a,b)\in \mathcal T$, with $a=2q,b=2r$, let $(s,t,q)\in \mathcal S$ such that $\pi (s,t,q)=(a,b)$. Then set
\begin{equation}\label{E:vp}
\v_i(a,b):=\pi \circ \phi_i(s,t,q),\qquad \psi_i(a,b):=\pi \circ\phi_i(s,t,r).
\end{equation}
Note that from \eqref{E:py3} we have $a+b=6st$,
so $(a,b)=(2q, 6st-2q)$.
From the definition of $\phi_1$, the  map $\v_1$ leaves $a$ and $t$  unchanged and $s$ is changed to $s'= 6qt-s= 3at-s$. Then under $\v_1$, the value of $b$ is changed to
\begin{align}
b'&= 6s't-a=18at^2-6st-a=18at^2-(a+b)-a\notag \\
&=\frac{3a^2 b -\sigma(a,b)  a\sqrt{9a^2 b^2-12(a+b)^2}}2-2a-b, \label{E:b'}
\end{align}
using \eqref{E:t}. Note that a priori, we don't know which of the resulting components, $a$ or $b'$, is the larger.
So we set
\[
\v_1: (a,b) \mapsto \left(\min(a,b'),\max(a,b')\right).
\]
Similarly, using $\phi_2$, we set
\[
\v_2: (a,b) \mapsto \left(\min(a,b''),\max(a,b'')\right),
\]
where
\begin{equation}\label{E:b''}
b''= \frac{3a^2 b +\sigma(a,b)  a\sqrt{9a^2 b^2-12(a+b)^2}}2-2a-b. 
\end{equation}

Similarly, analogous to $\v_1,\v_2$, interchanging the roles of $a$ and $b$, we have two further maps:
\begin{align*}
\psi_1: (a,b) &\mapsto \left(\min(a',b),\max(a',b)\right),\\
\psi_2: (a,b) &\mapsto \left(\min(a'',b),\max(a'',b)\right),\end{align*}
where
\begin{align*}
 a'&=\frac{3a b^2 +\sigma(a,b)  b\sqrt{9a^2 b^2-12(a+b)^2}}2-a-2b,\\
 a''&=\frac{3a b^2 -\sigma(a,b)  b\sqrt{9a^2 b^2-12(a+b)^2}}2-a-2b.
\end{align*}
These maps are obtained from the maps $\phi_1\circ \phi_3$ and $\phi_2\circ \phi_3$, respectively.

Note that while $\phi_1,\phi_2,\phi_3$ are involutions and generate a group of transformations of $\mathcal S$, the same is not true of the maps $\v_1,\v_2,\psi_1,\psi_2$ of $\mathcal T$. Indeed, they are not even all bijections. For example,
$\phi_2(2,4)=(2,4)=\phi_2(4,14)$.
Moreover, the study of the maps $\phi_1,\phi_2,\psi_1,\psi_2$ is considerably complicated by the term $\sigma(a,b)$. So it is often convenient to work instead  with the pairs $(s,t)$. From the definition of the maps $\phi_i$, we have
\[
\phi_1(s,t,u/2)= (3 ut-s,t,u/2),\quad
\phi_2(s,t,u/2)= (s,us-t,u/2),
\]
where  $u=a$ or $b$, and $a,b$ are given by \eqref{E:ab}.
We define the linear involutions $\phi_{1,u},\phi_{2,u}$ on the set 
$\Sigma:=\{(s,t)\in\N^2: s,t\ \text{odd and coprime}\}$,
 by $\phi_{1,u}(s,t)= (3u t-s,t), 
\phi_{2,u}(s,t)= (s,us-t)$, or equivalently in matrix form,
\begin{equation}\label{E:phimatrices}
\phi_{1,u}= 
\begin{pmatrix}-1&3u\\0&1\end{pmatrix},\qquad
\phi_{2,u}=
\begin{pmatrix}1&0\\u&-1\end{pmatrix}.
\end{equation}
By taking $u=a$ or $b$, \eqref{E:phimatrices} effectively gives four functions. For  given $(a,b)$, it is often easier to employ these functions than the functions $\v_1,\v_2,\psi_1,\psi_2$, and then use 
\eqref{E:ab} to determine the resulting corresponding values of $a,b$, where necessary.
In Figure~\ref{F:tree}, and later in Figure~\ref{F:a4},  the edges are labelled with the corresponding maps $\phi_{i,u}$, for $i\in\{1,2\}$ and 
$u\in\{a,b\}$.

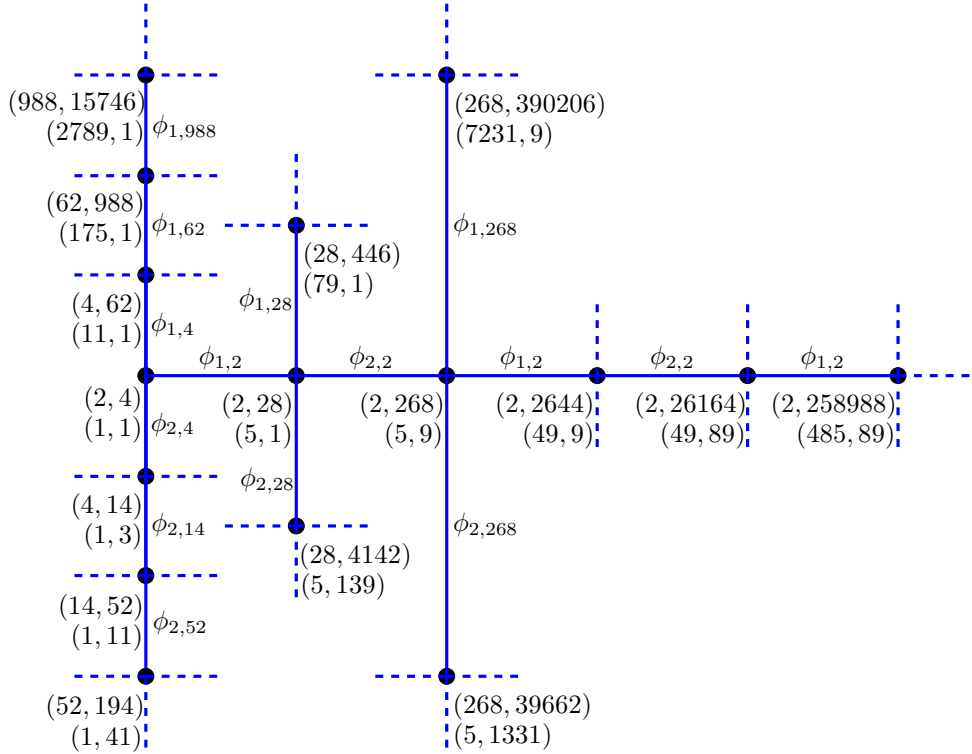
\begin{figure}[h]
\begin{tikzpicture}[scale=2][line cap=round,line join=round,>=triangle 45,x=1cm,y=1cm]
\def\fx{0};
\def\fy{0};
\def\rx{1};
\def\ry{0};
\def\rrx{2};
\def\rry{0};
\def\rrrx{3};
\def\rrry{0};
\def\rrrrx{4};
\def\rrrry{0};
\def\rrrrrx{5};
\def\rrrrry{0};
\def\rux{1};
\def\ruy{1};
\def\rdx{1};
\def\rdy{-1};
\def\rrux{2};
\def\rruy{2};
\def\rrdx{2};
\def\rrdy{-2};
\def\ux{0};
\def\uy{1};
\def\uuy{2};
\def\uuuy{3};
\def\dx{0};
\def\dy{-1};
\def\ddy{-2};
\def\dddy{-3};
\draw [fill=black] (\fx,\fy) circle (1.5pt);
\draw[color=black] (\fx-.22,\fy-.16)  node {$(2,4)$};
\draw[color=black] (\fx-.22,\fy-.36)  node {$(1,1)$};
\draw[color=black] (\fx+.5,\fy+.11)  node {$\phi_{1,2}$};
\draw [fill=black] (\rx,\ry) circle (1.5pt);
\draw[color=black] (\rx-.26,\ry-.2) node {$(2,28)$};
\draw[color=black] (\rx-.22,\ry-.4) node {$(5,1)$};
\draw[color=black] (\rx+.5,\ry+.11)  node {$\phi_{2,2}$};
\draw [fill=black] (\rrx,\rry) circle (1.5pt);
\draw[color=black] (\rrx-.3,\rry-.2) node {$(2,268)$};
\draw[color=black] (\rrx-.22,\rry-.4) node {$(5,9)$};
\draw[color=black] (\rrx+.5,\rry+.11)  node {$\phi_{1,2}$};
\draw [fill=black] (\rrrx,\rrry) circle (1.5pt);
\draw[color=black] (\rrrx-.35,\rrry-.2) node {$(2,2644)$};
\draw[color=black] (\rrrx-.26,\rrry-.4) node {$(49,9)$};
\draw[color=black] (\rrrx+.5,\rrry+.11)  node {$\phi_{2,2}$};
\draw [fill=black] (\rrrrx,\rrrry) circle (1.5pt);
\draw[color=black] (\rrrrx-.38,\rrrry-.2) node {$(2,26164)$};
\draw[color=black] (\rrrrx-.3,\rrrry-.4) node {$(49,89)$};
\draw[color=black] (\rrrrx+.5,\rrrry+.11)  node {$\phi_{1,2}$};
\draw [fill=black] (\rrrrrx,\rrrrry) circle (1.5pt);
\draw[color=black] (\rrrrrx-.43,\rrrrry-.2) node {$(2,258988)$};
\draw[color=black] (\rrrrrx-.35,\rrrrry-.4) node {$(485,89)$};
\draw [fill=black] (\rux,\ruy) circle (1.5pt);
\draw[color=black] (\rux+.37,\ruy-.2) node {$(28,446)$};
\draw[color=black] (\rux+.28,\ruy-.4) node {$(79,1)$};
\draw[color=black] (\rux-.2,\ruy-.5) node {$\phi_{1,28}$};
\draw [fill=black] (\rdx,\rdy) circle (1.5pt);
\draw[color=black] (\rdx+.39,\rdy-.2) node {$(28,4142)$};
\draw[color=black] (\rdx+.31,\rdy-.4) node {$(5,139)$};
\draw[color=black] (\rdx-.19,\rdy+.3) node {$\phi_{2,28}$};
\draw [fill=black] (\rrux,\rruy) circle (1.5pt);
\draw[color=black] (\rrux+.55,\rruy-.2) node {$(268,390206)$};
\draw[color=black] (\rrux+.38,\rruy-.4) node {$(7231,9)$};
\draw[color=black] (\rrux+.26,\rruy-1) node {$\phi_{1,268}$};
\draw [fill=black] (\rrdx,\rrdy) circle (1.5pt);
\draw[color=black] (\rrdx+.5,\rrdy-.2) node {$(268,39662)$};
\draw[color=black] (\rrdx+.37,\rrdy-.4) node {$(5,1331)$};
\draw[color=black] (\rrdx+.26,\rrdy+1) node {$\phi_{2,268}$};
\draw [fill=black] (\fx,\uy*.67) circle (1.5pt);
\draw[color=black] (\fx-.26,\uy*.67-.2) node {$(4,62)$};
\draw[color=black] (\fx-.26,\uy*.67-.4) node {$(11,1)$};
\draw[color=black] (\fx+.18,\uy*.33) node {$\phi_{1,4}$};
\draw[color=black] (\fx-.34,\uuy*.67-.2) node {$(62,988)$};
\draw[color=black] (\fx-.3,\uuy*.67-.4) node {$(175,1)$};
\draw[color=black] (\fx+.22,\uuy*.5) node {$\phi_{1,62}$};
\draw[color=black] (\fx-.46,\uuuy*.67-.2) node {$(988,15746)$};
\draw[color=black] (\fx-.34,\uuuy*.67-.4) node {$(2789,1)$};
\draw[color=black] (\fx+.26,\uuuy*.55) node {$\phi_{1,988}$};
\draw[color=black] (\fx-.26,\dy*.67-.2) node {$(4,14)$};
\draw[color=black] (\fx-.22,\dy*.67-.4) node {$(1,3)$};
\draw[color=black] (\fx+.18,\dy*.33) node {$\phi_{2,4}$};
\draw[color=black] (\fx-.3,\ddy*.67-.2) node {$(14,52)$};
\draw[color=black] (\fx-.26,\ddy*.67-.4) node {$(1,11)$};
\draw[color=black] (\fx+.22,\ddy*.5) node {$\phi_{2,14}$};
\draw[color=black] (\fx-.34,\dddy*.67-.2) node {$(52,194)$};
\draw[color=black] (\fx-.265,\dddy*.67-.4) node {$(1,41)$};
\draw[color=black] (\fx+.23,\dddy*.55) node {$\phi_{2,52}$};
\draw [fill=black] (\fx,\uy*1.33) circle (1.5pt);
\draw [fill=black] (\fx,\uy*2) circle (1.5pt);
\draw [fill=black] (\fx,\dy*.67) circle (1.5pt);
\draw [fill=black] (\fx,\dy*1.33) circle (1.5pt);
\draw [fill=black] (\fx,\dy*2) circle (1.5pt);
\draw [line width=1.2pt,color=qqqqff] (\fx,\fy)-- (\ux,\uy*2-.05);
\draw [line width=1.2pt,color=qqqqff] (\fx,\fy)-- (\ux,\uy*1.33-.05);
\draw [line width=1.2pt,color=qqqqff] (\fx,\fy)-- (\ux,\uy*0.67-.05);
\draw [line width=1.2pt,color=qqqqff] (\fx,\fy)-- (\dx,\dy*2+.05);
\draw [line width=1.2pt,color=qqqqff] (\fx,\fy)-- (\dx,\dy*1.33+.05);
\draw [line width=1.2pt,color=qqqqff] (\fx,\fy)-- (\dx,\dy*.67+.05);
\draw [line width=1.2pt,color=qqqqff] (\fx,\fy)-- (\rx,\ry);
\draw [line width=1.2pt,color=qqqqff] (\rx,\ry)-- (\rrx,\rry);
\draw [line width=1.2pt,color=qqqqff] (\rx,\ry)-- (\rux,\ruy);
\draw [line width=1.2pt,color=qqqqff] (\rx,\ry)-- (\rdx,\rdy);
\draw [line width=1.2pt,color=qqqqff] (\rrx,\rry)-- (\rrrx,\rrry);
\draw [line width=1.2pt,color=qqqqff] (\rrx,\rry)-- (\rrux,\rruy);
\draw [line width=1.2pt,color=qqqqff] (\rrx,\rry)-- (\rrdx,\rrdy);
\draw [line width=1.2pt,color=qqqqff] (\rrrx,\rrry)-- (\rrrrx,\rrrry);
\draw [line width=1.2pt,color=qqqqff] (\rrrrx,\rrrry)-- (\rrrrrx,\rrrrry);
\draw [line width=1.2pt,dashed,color=qqqqff] (\fx,\uy*2)-- (\fx,\uy*2+.5);
\draw [line width=1.2pt,dashed,color=qqqqff] (\fx,\uy*2)-- (\fx+.5,\uy*2);
\draw [line width=1.2pt,dashed,color=qqqqff] (\fx,\uy*2)-- (\fx-.5,\uy*2);
\draw [line width=1.2pt,dashed,color=qqqqff] (\fx,\uy*1.33)-- (\fx+.5,\uy*1.33);
\draw [line width=1.2pt,dashed,color=qqqqff] (\fx,\uy*1.33)-- (\fx-.5,\uy*1.33);
\draw [line width=1.2pt,dashed,color=qqqqff] (\fx,\uy*.67)-- (\fx+.5,\uy*.67);
\draw [line width=1.2pt,dashed,color=qqqqff] (\fx,\uy*.67)-- (\fx-.5,\uy*.67);
\draw [line width=1.2pt,dashed,color=qqqqff] (\fx,\dy*2)-- (\fx,\dy*2-.5);
\draw [line width=1.2pt,dashed,color=qqqqff] (\fx,\dy*2)-- (\fx+.5,\dy*2);
\draw [line width=1.2pt,dashed,color=qqqqff] (\fx,\dy*2)-- (\fx-.5,\dy*2);
\draw [line width=1.2pt,dashed,color=qqqqff] (\fx,\dy*1.33)-- (\fx+.5,\dy*1.33);
\draw [line width=1.2pt,dashed,color=qqqqff] (\fx,\dy*1.33)-- (\fx-.5,\dy*1.33);
\draw [line width=1.2pt,dashed,color=qqqqff] (\fx,\dy*.67)-- (\fx+.5,\dy*.67);
\draw [line width=1.2pt,dashed,color=qqqqff] (\fx,\dy*.67)-- (\fx-.5,\dy*.67);
\draw [line width=1.2pt,dashed,color=qqqqff] (\rrrx,\rrry)-- (\rrrx,\rrry+.5);
\draw [line width=1.2pt,dashed,color=qqqqff] (\rrrx,\rrry)-- (\rrrx,\rrry-.5);
\draw [line width=1.2pt,dashed,color=qqqqff] (\rrrrx,\rrrry)-- (\rrrrx,\rrrry+.5);
\draw [line width=1.2pt,dashed,color=qqqqff] (\rrrrx,\rrrry)-- (\rrrrx,\rrrry-.5);
\draw [line width=1.2pt,dashed,color=qqqqff] (\rux,\ruy)-- (\rux+.5,\ruy);
\draw [line width=1.2pt,dashed,color=qqqqff] (\rux,\ruy)-- (\rux,\ruy+.5);
\draw [line width=1.2pt,dashed,color=qqqqff] (\rux,\ruy)-- (\rux-.5,\ruy);
\draw [line width=1.2pt,dashed,color=qqqqff] (\rdx,\rdy)-- (\rdx+.5,\rdy);
\draw [line width=1.2pt,dashed,color=qqqqff] (\rdx,\rdy)-- (\rdx,\rdy-.5);
\draw [line width=1.2pt,dashed,color=qqqqff] (\rdx,\rdy)-- (\rdx-.5,\rdy);
\draw [line width=1.2pt,dashed,color=qqqqff] (\rrux,\rruy)-- (\rrux+.5,\rruy);
\draw [line width=1.2pt,dashed,color=qqqqff] (\rrux,\rruy)-- (\rrux,\rruy+.5);
\draw [line width=1.2pt,dashed,color=qqqqff] (\rrux,\rruy)-- (\rrux-.5,\rruy);
\draw [line width=1.2pt,dashed,color=qqqqff] (\rrdx,\rrdy)-- (\rrdx+.5,\rrdy);
\draw [line width=1.2pt,dashed,color=qqqqff] (\rrdx,\rrdy)-- (\rrdx,\rrdy-.5);
\draw [line width=1.2pt,dashed,color=qqqqff] (\rrdx,\rrdy)-- (\rrdx-.5,\rrdy);

\draw [line width=1.2pt,dashed,color=qqqqff] (\rrrrrx,\rrrrry)-- (\rrrrrx+.5,\rrrrry);
\draw [line width=1.2pt,dashed,color=qqqqff] (\rrrrrx,\rrrrry)-- (\rrrrrx,\rrrrry-.5);
\draw [line width=1.2pt,dashed,color=qqqqff] (\rrrrrx,\rrrrry)-- (\rrrrrx,\rrrrry+.5);

\end{tikzpicture}
\caption{The tree of ELEPs; the elements $(a,b)$ are shown above the corresponding pairs $(s,t)$.}\label{F:tree}
\end{figure}


\section{Diagonals, heights and altitudes}\label{S:dah} 

Let us first fix some terminology and notation; see Figure \ref{F:terms}.

\begin{definition}
Consider a non-square ELEP $P$. We denote the length of its long (resp.~short) diagonal $d_l$ (resp.~$d_s$). The \emph{heights} of $P$ are the distances between opposite sides; we denote the long (resp.~short) height $h_l$ (resp.~$h_s$). Notice that the long (resp.~short) height connects 
short (resp.~long) sides.
Each diagonal $d$ partitions $P$ into two congruent triangles $T$. We will call the distance from $d$ to the third vertex of $T$ an  \emph{altitude} of $P$. We call the altitude from $d_s$ (resp.~$d_l$) the \emph{long} (resp.~\emph{short}) altitude and denote it $\eta_l$ (resp.~$\eta_s$). 
\end{definition}

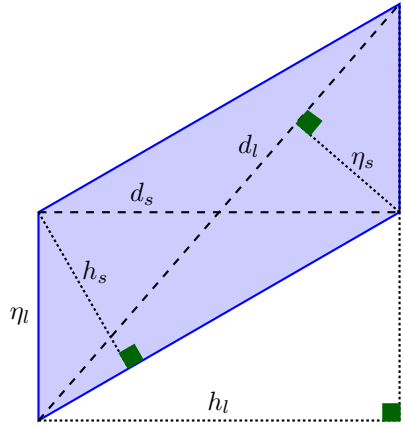
\begin{figure}[h]
\begin{tikzpicture}[scale=.8]
\def\r{1.732};

\draw [fill, blue!20] (0,0)
   -- (0, -4*\r/2)  
  -- (6,0) 
  -- (6,4*\r/2) 
 -- cycle;

\draw [thick, blue] (0,0)
   -- (0, -4*\r/2)  
  -- (6,0) 
  -- (6,4*\r/2) 
  -- cycle;

\draw [line width=0.8pt,dashed] (0,0)-- (6,0);
\draw [line width=0.8pt,dashed] (0,-4*\r/2)-- (6,4*\r/2);

\draw [line width=0.8pt,dash pattern=on 1pt off 1pt] (0,-4*\r/2)-- (6,-4*\r/2);
\draw [line width=0.8pt,dash pattern=on 1pt off 1pt] (6,-4*\r/2)-- (6,4*\r/2);
\draw [line width=0.8pt,dash pattern=on 1pt off 1pt] (0,0)-- (.85*\r,-3*.85);

\draw [line width=0.8pt,dash pattern=on 1pt off 1pt] (6,0)-- (6-.245*4*\r,.245*6);

\draw[line width=0.8pt,color=qqwuqq,fill=qqwuqq,fill opacity=0.1] (6,-4*\r/2) -- (6,-4*\r/2+.27) -- (6-.27,-4*\r/2+.27) -- (6-.27,-4*\r/2) -- cycle; 
\draw[line width=0.8pt,color=qqwuqq,fill=qqwuqq,fill opacity=0.1] (.86*\r,-3*.86) -- (.86*\r+.234,-3*.86+.135) -- (.86*\r+.1,-3*.86+.370) -- (.86*\r-.135,-3*.86+.234) -- cycle; 

\draw[line width=0.8pt,color=qqwuqq,fill=qqwuqq,fill opacity=0.1] (6-.246*4*\r,.246*6) -- (6-.215*4*\r,.215*6) -- (6-.246*4*\r+.4,.246*6+.04) -- (6-.246*4*\r+.177,.246*6+.204) -- cycle;

\draw (3,-4*\r/2+.3) node {$h_l$};
\draw (.43*\r+.2,-3*.43+.3) node {$h_s$};
\draw (1*\r,.3) node {$d_s$};
\draw (3.5,1.1) node {$d_l$};
\draw (-.3,-\r) node {$\eta_l$};
\draw (5.4,.85) node {$\eta_s$};

  \end{tikzpicture}
\caption{Diagonals, heights and altitudes}\label{F:terms}
\end{figure}

\begin{lemma}\label{L:widths}
Suppose an ELEP $P$ has sides $a\sqrt3,b\sqrt3$ with $a,b\in \N$ and $a\le b$. Then 
\begin{enumerate}
\item[\textup{(a)}] $\displaystyle h_l=\frac{2(a+b)}a,\qquad h_s=\frac{2(a+b)}b$,
\item[\textup{(b)}] $\displaystyle \eta_l=\frac{2\sqrt3(a+b)}{d_s},\qquad \eta_s=\frac{2\sqrt3(a+b)}{d_l}$.
\end{enumerate}
\end{lemma}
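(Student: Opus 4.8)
The plan is to derive all four formulas from a single observation: by the equable hypothesis, the area of $P$ equals its perimeter, which is $2(a\sqrt3+b\sqrt3)=2\sqrt3(a+b)$. Everything then follows from the elementary identity ``area $=$ base $\times$ height'' applied to the parallelogram and to the triangles cut off by its diagonals.

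For part (a), I would use the fact that the area of a parallelogram equals the product of any side, taken as base, with the perpendicular distance to the opposite parallel side. Taking a short side of length $a\sqrt3$ as base, the corresponding height is the distance between the two short sides, which is $h_l$ by the Definition (the long height joins the short sides). Hence $a\sqrt3\cdot h_l=2\sqrt3(a+b)$, giving $h_l=2(a+b)/a$. Taking instead a long side of length $b\sqrt3$ as base yields $b\sqrt3\cdot h_s=2\sqrt3(a+b)$, so $h_s=2(a+b)/b$. Since $a\le b$, this indeed produces $h_l\ge h_s$, consistent with the labelling.

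For part (b), I would observe that each diagonal $d$ splits $P$ into two congruent triangles, each of area $\tfrac12\cdot 2\sqrt3(a+b)=\sqrt3(a+b)$. By definition, the altitude of $P$ associated with $d$ is the height of such a triangle measured from the side $d$; thus $\tfrac12\,d\cdot(\text{altitude})=\sqrt3(a+b)$, so the altitude equals $2\sqrt3(a+b)/d$. Applying this with $d=d_s$ gives the long altitude $\eta_l=2\sqrt3(a+b)/d_s$, and with $d=d_l$ gives the short altitude $\eta_s=2\sqrt3(a+b)/d_l$, exactly as stated. Since $d_s\le d_l$, again $\eta_l\ge\eta_s$, matching the names.

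Because each step reduces to the base-times-height identity together with the equable area $2\sqrt3(a+b)$, there is no genuine analytic obstacle. The only point requiring care is matching the ``long/short'' terminology to the correct side or diagonal: this is dictated entirely by the monotonicity $a\le b$ (hence $d_s\le d_l$) and by the conventions in the Definition, namely that the long height joins the short sides and the altitude from the short diagonal is the long one.
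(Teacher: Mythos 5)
Your proof is correct and is essentially identical to the paper's own argument: both parts follow from equating the equable area $2\sqrt3(a+b)$ with the base-times-height expressions $a\sqrt3\,h_l$, $b\sqrt3\,h_s$ for the parallelogram and $\eta_l d_s$, $\eta_s d_l$ for the diagonal decompositions. You also correctly matched the long/short labelling conventions from the paper's definition, so nothing is missing.
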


\begin{proof}
By equability, the area of $P$ is $2\sqrt3(a+b)$, but the area is obviously also $a\sqrt3h_l$ and $b\sqrt3h_s$. This gives (a).
But the area of $P$ is also twice the area of the triangle determined by each diagonal. So the area of $P$ is both $\eta_l d_s$ and $\eta_s d_l$.  This gives~(b).
\end{proof}

\begin{remark}\label{R:widths}
If  $h_s$  is an integer, then $h_s=2+\frac{2a}b$ by Lemma~\ref{L:widths}(a), and so as $b>a$, the only possibility is $b=2a$. But then
$9a^2b^2-12(a+b)^2=36 a^2 (a^2-3)$, which is a square only when $a=2$. This is the root parallelogram $(a,b)=(2,4)$. It has $h_s=3$ and  $h_l=6$.  Similarly,
if  $h_l$  is an integer, then $h_l=2+\frac{2b}a$ by Lemma~\ref{L:widths}(a), and so $2b=ak$, for some positive integer $k$. In this case, $h_l=2+k$ and $h_s=2+\frac{4}k$.  \end{remark}

\begin{theorem}\label{T:altbd} For every ELEP the altitudes satisfy  $2<\eta_s\le \frac6{\sqrt7}$ and $2<\eta_l\le 2\sqrt3$.
\end{theorem}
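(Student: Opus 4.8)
The plan is to abandon the $(a,b)$ coordinates in favour of the Rosenberger parameters $s,t$ of Theorem~\ref{T:345}, in which all the relevant quantities factorise. First I would record the formulas of Lemma~\ref{L:widths}(b), so that $\eta_s^2=\frac{12(a+b)^2}{d_l^2}$ and $\eta_l^2=\frac{12(a+b)^2}{d_s^2}$. By Theorem~\ref{T:345} we have $a+b=6st$ and $ab=2(s^2+3t^2)$, and a short computation gives $9a^2b^2-12(a+b)^2=36(s^2-3t^2)^2$, so the square root appearing in Lemma~\ref{L:diag} equals $6|s^2-3t^2|$. Note also $12(a+b)^2=432\,s^2t^2$.

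Next I would substitute this into the diagonal formula $d^2=3(a^2+b^2)\pm 2\sqrt{9a^2b^2-12(a+b)^2}$ of Lemma~\ref{L:diag}, using $a^2+b^2=(a+b)^2-2ab=36s^2t^2-4(s^2+3t^2)$. The two values of $d^2$ then factorise as $36t^2(3s^2-2)$ and $12s^2(9t^2-2)$; their difference is $24(s^2-3t^2)$, so the larger is $d_l^2$ and the smaller is $d_s^2$ according to the sign of $s^2-3t^2$. (They are never equal, since $s^2=3t^2$ is impossible in integers, reflecting the absence of a rectangular ELEP.) Feeding this into $\eta_s^2,\eta_l^2$, the two candidate values of $\tfrac{432\,s^2t^2}{d^2}$ are exactly
\[
f(s):=\frac{12 s^2}{3s^2-2},\qquad g(t):=\frac{36 t^2}{9t^2-2},
\]
and one obtains the clean identities $\eta_s^2=\min\bigl(f(s),g(t)\bigr)$ and $\eta_l^2=\max\bigl(f(s),g(t)\bigr)$, valid for every ELEP (recall that, as $s,t$ are coprime and odd, $s,t\ge 1$).

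Finally the theorem reduces to elementary one-variable estimates on $f$ and $g$. Both are decreasing with limit $4$, and $f(s)>4\iff 0>-8$ and $g(t)>4\iff 0>-8$ hold for all $s,t\ge1$; this gives the strict lower bounds $\eta_s^2,\eta_l^2>4$, i.e. $\eta_s,\eta_l>2$. For the upper bounds, $f(s)\le 12\iff s^2\ge1$ (equality iff $s=1$) while $g(t)<12$ always, so $\eta_l^2=\max(f,g)\le 12$; and $g(t)\le\frac{36}{7}\iff t^2\ge1$ (equality iff $t=1$), so $\eta_s^2=\min(f,g)\le g(t)\le\frac{36}{7}$. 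These give $\eta_s\le\frac{6}{\sqrt7}$ and $\eta_l\le 2\sqrt3$, with equality exactly when $t=1$ (resp.\ $s=t=1$), consistent with the root $(2,4)$ where $(s,t)=(1,1)$ and with $(4,14)$ where $(s,t)=(1,3)$.

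The main obstacle is conceptual rather than computational. Worked directly in $(a,b)$, the bound $\eta_l\le 2\sqrt3$ is equivalent to a quartic inequality that is simply \emph{false} for unrestricted positive reals (it fails for $a=b$ large, the near-rhombus regime), so the ELEP structure must be used in an essential way. Passing to the $(s,t)$ coordinates is precisely what brings the constraint to bear and linearises the problem into the monotonicity of $f$ and $g$; once the identities $\eta_s^2=\min(f,g)$, $\eta_l^2=\max(f,g)$ are in hand, the rest is routine.
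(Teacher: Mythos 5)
Your proof is correct and follows essentially the same route as the paper: the paper likewise passes to the parameters $s,t$ of Theorem~\ref{T:345}, records the two diagonal values $d^2\in\{36t^2(3s^2-2),\,12s^2(9t^2-2)\}$ (its Remark~\ref{R:diags}), and bounds exactly the same two functions $f(s)=4+\tfrac{8}{3s^2-2}$ and $g(t)=4+\tfrac{8}{9t^2-2}$; your identities $\eta_s^2=\min(f,g)$, $\eta_l^2=\max(f,g)$ are just a compact repackaging of the paper's two-case split on the sign of $s^2-3t^2$.

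The only flaw is your final parenthetical on equality cases, which is extraneous to the theorem but misstated. Since $f(s)=\tfrac{36}{7}$ and $g(t)=12$ have no integer solutions, equality $\eta_s=\tfrac{6}{\sqrt7}$ requires $g(t)=\tfrac{36}{7}$ \emph{and} $g(t)\le f(s)$, which forces $(s,t)=(1,1)$ (not merely $t=1$); and equality $\eta_l=2\sqrt3$ requires $f(s)=12$ and $f(s)\ge g(t)$, which holds exactly when $s=1$ with $t$ arbitrary (not $s=t=1$). Indeed $(a,b)=(4,14)$, $(s,t)=(1,3)$, which you cite, attains $\eta_l=2\sqrt3$ while violating your stated condition $s=t=1$; the full family of equality cases for $\eta_l$ is the $s=1$ branch, i.e.\ the horizontal-diagonal ELEPs of Table~\ref{T:hd}. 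This slip does not affect the theorem itself, whose inequalities you establish correctly.
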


\begin{remark}\label{R:diags}
Suppose an ELEP $P$ has  sides $a\sqrt3,b\sqrt3$ with $a,b\in \N$ and $a\le b$. 
By Theorem~ \ref{T:345}, we have
$ab= 2(s^2+3t^2)$ and $a+b=6st$, so $\sqrt{9a^2 b^2 -12(a+b)^2}= 6|s^2-3t^2|$,
for some odd, coprime integers $s,t$.
If $s^2>3t^2$, then from Lemma \ref{L:diag}, the diagonals are given by
\begin{align*}
d_l^2&=3(a^2  +  b^2) +2\sqrt{9a^2 b^2 -12(a+b)^2} \\
&=3\cdot 36 s^2t^2-12(s^2+3t^2)+12|s^2-3t^2|  =12(9 s^2t^2-6t^2), \\
d_s^2&=3\cdot36 s^2t^2-12(s^2+3t^2)-12|s^2-3t^2| =12(9s^2t^2-2s^2).
\end{align*}
Similarly, if $s^2<3t^2$, the diagonals are given by
$d_l^2=12(9s^2t^2-2s^2)$ and $d_s^2=12(9s^2t^2- 6t^2)$.
Putting the cases together, we have
\begin{equation}\label{E:dlds}
d_l^2=12(9s^2t^2-2\min(s^2,3t^2)),\qquad d_s^2=12(9s^2t^2-2\max(s^2,3t^2)).
 \end{equation}
Notice that when a diagonal $d$ satisfies $d^2=12(9 s^2t^2-6t^2)$, one has $d=6t\sqrt{3s^2-2}$, so $d$ is an integer if and only if $3s^2-2$ is a square.
When $d^2=12(9 s^2t^2-2s^2)$, one has $d=2s\sqrt{27t^2-6}$. In this case, $d$ is never an integer, since $27t^2-6\equiv 3\pmod 9$ and $3$ is not a quadratic residue modulo 9.
\end{remark}

\begin{proof}
Suppose an ELEP $P$ has  sides $a\sqrt3,b\sqrt3$ with $a,b\in \N$ and $a\le b$. 
By Theorem~ \ref{T:345}, we have
$ab= 2(s^2+3t^2)$ and $a+b=6st$, so $\sqrt{9a^2 b^2 -12(a+b)^2}= 6|s^2-3t^2|$,
for some odd, coprime integers $s,t$.
Using Lemma~\ref{L:widths} and Remark~\ref{R:diags},
the altitudes are given by
\begin{align*}
\eta_l^2&=\frac{12(a+b)^2}{d_s^2}=\frac{36  s^2t^2}{9s^2t^2-2\max(s^2,3t^2)},\\
\eta_s^2&=\frac{12(a+b)^2}{d_l^2}=\frac{36  s^2t^2}{9s^2t^2-2\min(s^2,3t^2)}.
\end{align*}
We consider the two cases according to whether $s^2>3t^2$ or $s^2<3t^2$. First suppose $s^2>3t^2$.
Then
\begin{align*}
\eta_l^2&=\frac{36  s^2t^2}{9 s^2t^2-2s^2}=\frac{36  t^2}{9 t^2-2}=4+\frac{8}{9t^2-2 },\\
\eta_s^2&=\frac{36  s^2t^2}{9 s^2t^2-6t^2}=\frac{12  s^2}{3s^2-2 }=4+\frac{8}{3s^2-2 }.
\end{align*}
Hence, as $t\ge 1$, we have $4<\eta_l^2\le 4+\frac{8}{7}$, so $2<\eta_l\le \frac6{\sqrt7}$.
Similarly, as $s\ge 1$, we have $4<\eta_s^2\le 12$, so $2<\eta_s\le 2\sqrt3$. Thus, as $\eta_s<\eta_l$, we have $2<\eta_s<\eta_l\le \frac6{\sqrt7}$.

Now suppose $s^2<3t^2$.
Then
\begin{align*}
\eta_l^2&=\frac{36  s^2t^2}{9 s^2t^2-6t^2}=\frac{12  s^2}{3s^2-2 }=4+\frac{8}{3s^2-2 }\\
\eta_s^2&=\frac{36  s^2t^2}{9 s^2t^2-2s^2}=\frac{36  t^2}{9t^2-2 }=4+\frac{8}{9t^2-2 }.
\end{align*}
Hence, as 
$s\ge 1$, we have $4<\eta_l^2\le 12$, so $2<\eta_l\le 2\sqrt3$. 
Similarly, as 
$t\ge 1$, we have $4<\eta_s^2\le 4+\frac{8}{7}$, so $2<\eta_s\le \frac6{\sqrt7}$.
Thus, as $\eta_s<\eta_l$, we have $2<\eta_s<\eta_l\le 2\sqrt3$. 

Combining the two cases gives the required bounds on  $\eta_s$ and $\eta_l$. 
\end{proof}


\section{ELEPs with horizontal diagonal}\label{S:horh}

\begin{example}\label{E:hor}
Here we give examples of ELEPs with a horizontal diagonal.
Consider the sequence $(u_n)$ defined by $u_{n}= 4 u_{n-1} - u_{n-2}$, with $u_0=0$ and $u_1=1$. The following result is well known; see 
\cite[Chap.~5, Ex.~2.1]{Barb} and sequence A001353 in \cite{OEIS}. We provide a proof for completeness.

\begin{lemma}\label{L:xrec} For all $n\ge 1$, one has
\begin{enumerate}
\item[\textup{(a)}] $u_{n+1}u_{n-1}=u_n^2-1$,
\qquad  \textup{ (b)} $3u_{n}^2+1=(2u_n - u_{n-1})^2$.
\end{enumerate}
\end{lemma}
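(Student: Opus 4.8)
The plan is to establish both identities by induction, exploiting the fact that the recurrence $u_n = 4u_{n-1} - u_{n-2}$ forces certain quadratic expressions in consecutive terms to be constant along the sequence. (Its characteristic polynomial is $x^2 - 4x + 1$, with roots $2\pm\sqrt3$, but a direct inductive argument is cleaner and self-contained.)

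For part~(a), I would set $f(n) := u_{n+1}u_{n-1} - u_n^2$ and show $f(n) = -1$ for all $n \ge 1$. The base case $n=1$ is immediate: since $u_0 = 0$ and $u_1 = 1$, we get $f(1) = u_2\cdot 0 - 1 = -1$. For the inductive step I would prove $f(n+1) = f(n)$ directly. Substituting $u_{n+2} = 4u_{n+1} - u_n$ into $f(n+1) = u_{n+2}u_n - u_{n+1}^2$ gives $4u_{n+1}u_n - u_n^2 - u_{n+1}^2$, while substituting $u_{n-1} = 4u_n - u_{n+1}$ into $f(n) = u_{n+1}u_{n-1} - u_n^2$ yields the very same symmetric expression. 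Hence $f$ is constant and equals $-1$, which is exactly~(a).

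For part~(b), I would expand the right-hand side as $(2u_n - u_{n-1})^2 = 4u_n^2 - 4u_nu_{n-1} + u_{n-1}^2$, so that the claim is equivalent to $u_n^2 - 4u_nu_{n-1} + u_{n-1}^2 = 1$. This now follows from~(a): writing $u_{n+1} = 4u_n - u_{n-1}$ gives $u_{n+1}u_{n-1} = 4u_nu_{n-1} - u_{n-1}^2$, and combining this with $u_{n+1}u_{n-1} = u_n^2 - 1$ from~(a) rearranges precisely to the required identity. Alternatively, one could verify directly that $u_n^2 - 4u_nu_{n-1} + u_{n-1}^2$ is invariant under the recurrence and evaluate it at $n=1$, where it equals $1$.

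Honestly, there is no substantial obstacle here; both are standard Cassini/continuant-type identities. The only points requiring care are the bookkeeping in the inductive step for~(a)---arranging the two substitutions so that $f(n+1)$ and $f(n)$ visibly collapse to the same expression---and recognizing that~(b) is a short algebraic consequence of~(a), so that no separate induction is needed.
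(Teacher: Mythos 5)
Your proof is correct and follows essentially the same route as the paper: part (a) by induction, showing the quantity $u_{n+1}u_{n-1}-u_n^2$ is invariant under the recurrence (the paper substitutes the recurrence to reduce $f(n)$ to $f(n-1)$, you show $f(n+1)=f(n)$ by collapsing both to the same symmetric expression), and part (b) as a short algebraic consequence of (a) together with the recurrence, with no separate induction.
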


\begin{proof}(a) One has $u_{2}u_{0}=0=u_1^2-1$. Then for $n\ge 2$, by induction,
\begin{align*}
u_{n + 1} u_{n - 1} &- u_{n}^2
= (4 u_{n}  -u_{n - 1})u_{n - 1} - u_{n}(4 u_{n - 1}- u_{n - 2})\\
&=u_{n} u_{n - 2} - u_{n - 1}^2=-1.
\end{align*}

(b). For all $n\ge 1$, expanding $(2u_n - u_{n-1})^2$, one has, using (a),
\begin{align*}
3u_{n}^2+1-(2u_n - u_{n-1})^2&=1- u_{n}^2 +u_{n-1}(4u_n- u_{n-1})\\
&=1- u_{n}^2 + u_{n-1}u_{n+1}=0.\qedhere
\end{align*}
\end{proof}

Let 
\[
A_n=
(-6u_{n}, -2 \sqrt{3}),\quad
B_n=
(6u_{n+1} -6u_{n},0),\quad
C_n = 
(6u_{n+1},2 \sqrt{3}).
\]
The vertices $OA_nB_nC_n$ form a parallelogram on the Eisenstein lattice with diagonal $OB_n$ on the $x$-axis.  The parallelogram $OA_nB_nC_n$  has side lengths 
\begin{align*}
\overline{OA_n}&=\sqrt{36 u_{n}^2+12}=2\sqrt{3}\sqrt{3 u_{n}^2+1}=2\sqrt{3}(2u_{n} - u_{n-1}),\\
\overline{OC_n}&=
\sqrt{36 u_{n+1}^2+12}=2\sqrt{3}\sqrt{3 u_{n+1}^2+1}=2\sqrt{3}(2u_{n+1} - u_{n}),
\end{align*}
by Lemma~\ref{L:xrec}(b), and area
\[
\text{area}(OA_nB_nC_n)=\det\begin{pmatrix}
-6u_{n}& -2 \sqrt{3} \\ 
6u_{n+1}&2 \sqrt{3}
\end{pmatrix}=12 \sqrt{3}(u_{n+1}-u_{n}).
\]
Hence 
\begin{align*}
2(\overline{OA_n}+\overline{OC_n})&=4\sqrt{3}(2u_{n+1}+u_{n}- u_{n-1})\\
&=4\sqrt{3}(2u_{n+1} +u_{n}+ (u_{n+1}-4u_{n}))\\
&=12\sqrt{3}(u_{n+1} - u_{n})
=\text{area}(OA_nB_nC_n),
\end{align*}
so $OA_nB_nC_n$ is equable.

Table \ref{T:hd} lists the first 9 examples of ELEPs with horizontal diagonal. 
Note that the first four of these ELEPs appear in Figure~\ref{F:tree} in the branch that starts at the root $(2,4)$ and descends vertically.
Figure~\ref{F:hd} shows the first two examples, with the first one translated 6 units to the left. 

\begin{table}[h]
\begin{center}
\begin{tabular}{c|cccccc}
  \hline
  $n$ & $(q,u)$ & $a$ & $b$  & $A$& $B$  & $(s,t)$ \\
  \hline
  $0$ & $(1,0)$ & $2$ & $4$ & $-2-4\omega$ & $6$ & $(1,1)$\\
$1$ & $(2,1)$ & $4$ &  $14$ & $-8-4\omega$ & $18$ & $(1,3)$\\
$2$ & $(7,4)$ & $14$ & $52$ & $-26-4\omega$ & $66$ & $(1,11)$\\
$3$ & $(26,15)$ & $52$ & $194$ & $-92-4\omega$ & $246$ & $(1,41)$\\
$4$ & $(97,56)$ & $194$ & $724$ & $-338-4\omega$ & $918$ & $(1,153)$\\
$5$ & $(362,209)$ & $724$ & $2702$ & $-1256-4\omega$ & $3426$ & $(1,571)$\\
$6$ & $(1351,780)$ & $2702$ & $10084$ & $-4682-4\omega$ & $12786$ & $(1,2131)$\\
$7$ & $(5042,2911)$ & $10084$ & $37634$ & $-17468-4\omega$ & $47718$ & $(1,7953)$\\
$8$ & $(18817,10864)$ & $37634$ & $140452$ & $-65186-4\omega$ & $178086$ & $(1,29681)$\\
 \hline
\end{tabular}
\end{center}
\caption{Examples of ELEPs with horizontal diagonal}
\label{T:hd}
\end{table}

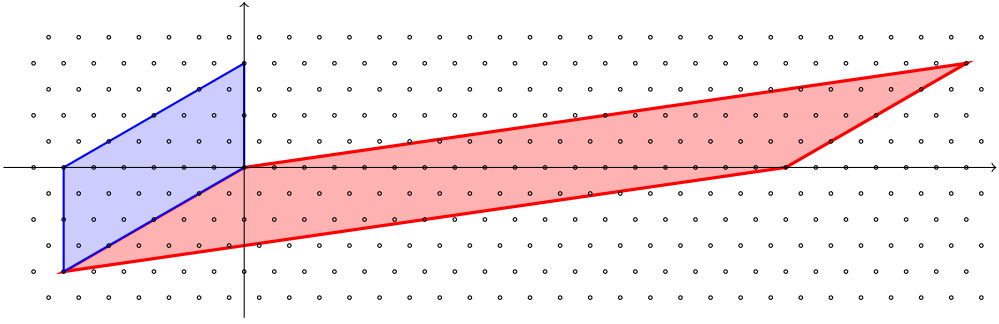
\begin{figure}[h]
\begin{tikzpicture}[scale=.4]
\def\r{1.732};

\draw [fill, red!30] (0,0) 
  -- (-8+4/2, -4*\r/2)  
  -- (18,0) 
  -- (24,4*\r/2) 
  -- cycle;

\draw [very thick, red] (0,0) 
   -- (-8+4/2, -4*\r/2)  
  -- (18,0) 
  -- (24,4*\r/2) 
 -- cycle;

\draw [fill, blue!20] (-6,0)
   -- (-6, -4*\r/2)  
  -- (0,0) 
  -- (0,4*\r/2) 
 -- cycle;

\draw [thick, blue] (-6,0)
   -- (-6, -4*\r/2)  
  -- (0,0) 
  -- (0,4*\r/2) 
  -- cycle;

\foreach \i in {-7,...,24}
\foreach \j in {-2,...,2}
\draw (\i,\j*\r) circle (.06);
\foreach \i in {-6,...,25}
\foreach \j in {-3,...,2}
\draw (\i-1/2,\r*\j+\r/2) circle (.06);

 \draw [->] (0,-5) -- (0,5.5);
 \draw [->] (-8,0) -- (25,0);

  \end{tikzpicture}
\caption{Two ELEPs with horizontal diagonal}\label{F:hd}
\end{figure}
\end{example}

\begin{theorem}\label{T:hor}
Up to Euclidean isometry, the only ELEPs with horizontal diagonal are those of Example~\ref{E:hor}.
\end{theorem}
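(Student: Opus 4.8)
The plan is to prove both inclusions, of which only one is substantive: the ELEPs constructed in Example~\ref{E:hor} visibly have a horizontal diagonal, so the work is to show that \emph{every} ELEP $P$ with a horizontal diagonal is congruent to one of them. Since an ELEP is determined up to a Euclidean motion by its pair $(a,b)$, it suffices to show that the parameters $(s,t)$ attached to $P$ via Theorem~\ref{T:345} are forced to have $s=1$, and that the resulting $(a,b)$ are exactly those tabulated in Example~\ref{E:hor}.

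First I would record the key lattice constraint. Normalising by a Euclidean motion, place the horizontal diagonal at a fixed imaginary part $y_0$; as its endpoints are Eisenstein points, and every $z=x+y\omega$ has imaginary part $y\tfrac{\sqrt3}{2}\in\tfrac{\sqrt3}{2}\Z$, we have $y_0\in\tfrac{\sqrt3}{2}\Z$. The two remaining vertices of $P$ are the apexes of the congruent triangles cut off by this diagonal, each lying at perpendicular distance equal to the relevant altitude; since the diagonal is horizontal this distance is \emph{vertical}. Hence each apex sits at imaginary part $y_0\pm\eta$, and being a lattice point forces $\eta\in\tfrac{\sqrt3}{2}\Z$. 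So the altitude from the horizontal diagonal is a positive integer multiple $m\tfrac{\sqrt3}{2}$.

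Next I would squeeze this against Theorem~\ref{T:altbd}. If the long diagonal were horizontal, the relevant altitude is $\eta_s$, so we would need $m\tfrac{\sqrt3}{2}\in(2,6/\sqrt7]$; but $2\cdot\tfrac{\sqrt3}{2}=\sqrt3<2$ and $3\cdot\tfrac{\sqrt3}{2}=\tfrac{3\sqrt3}{2}>6/\sqrt7$, so no such $m$ exists and the long diagonal cannot be horizontal. Thus the short diagonal is horizontal, with altitude $\eta_l=m\tfrac{\sqrt3}{2}\in(2,2\sqrt3]$, which forces $m\in\{3,4\}$. From the proof of Theorem~\ref{T:altbd} one has $\eta_l^2=4+8/K$ with $K$ a positive integer ($K=3s^2-2$ or $K=9t^2-2$). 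The value $m=3$ gives $\eta_l^2=\tfrac{27}{4}$, hence $K=\tfrac{32}{11}\notin\Z$, which is impossible; the value $m=4$ gives $\eta_l^2=12$, hence $K=1$, which forces $s=1$ (in the regime $s^2<3t^2$).

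Finally, with $s=1$, formula~\eqref{E:ab} gives $(a,b)=\bigl(3t-\sqrt{3t^2-2},\,3t+\sqrt{3t^2-2}\bigr)$; since $a,b\in\N$, the integer $3t^2-2$ must be a perfect square, i.e.\ $t$ solves the Pell equation $w^2-3t^2=-2$. Its positive solutions are $t=u_{n+1}-u_n$, which satisfy the same recurrence $v_{n+1}=4v_n-v_{n-1}$ as $(u_n)$ (a one-line consequence of the defining recurrence, via Lemma~\ref{L:xrec}), and these are precisely the parameters listed in the table of Example~\ref{E:hor}. Therefore $(a,b)$ coincides with that of some $OA_nB_nC_n$, and $P$ is congruent to it. I expect the main obstacle to be the middle step: establishing that $\eta$ is an \emph{exact} multiple of $\tfrac{\sqrt3}{2}$ and then excluding both the long diagonal and the case $m=3$, each of which depends on the sharpness of the altitude bounds in Theorem~\ref{T:altbd}.
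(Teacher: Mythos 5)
Your overall strategy is sound, and its second half takes a genuinely different route from the paper's. The shared skeleton: both arguments observe that the altitude from a horizontal diagonal is a vertical distance between lattice points, hence lies in $\tfrac{\sqrt3}{2}\N$, and both squeeze this against Theorem~\ref{T:altbd} to force $s=1$. You diverge in the case elimination: the paper writes the altitude as $y\tfrac{\sqrt3}{2}$ with $y\in\{3,4\}$ and kills $y=3$ by a mod~$3$ argument (its equation \eqref{E:a} forces $3\mid a$, contradicting $3\nmid ab$ from Theorem~\ref{T:345}), whereas you kill $m=3$ via the non-integrality of $K=32/11$ in $\eta_l^2=4+8/K$; both work. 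Your up-front exclusion of a horizontal \emph{long} diagonal by pure granularity ($\sqrt3<2$ and $\tfrac{3\sqrt3}{2}>6/\sqrt7$) is a nice bonus, since the paper only obtains that statement as a corollary \emph{after} the theorem. The larger difference is the endgame. After $s=1$, the paper goes back into coordinates: it derives $12q^2=x^2+4x+16$, combines this with the Pell equation $q^2-3u^2=1$ to get $x=\pm6u-2$, and eliminates the spurious sign by a separate contradiction, thereby matching the normalized lattice parallelogram vertex-by-vertex with the example's. You instead stay at the level of congruence classes, using the principle (stated in the paper's introduction) that an equable parallelogram is determined up to isometry by $(a,b)$; since the theorem is an ``up to Euclidean isometry'' statement, that shortcut is legitimate and shorter, and it is what each approach buys: the paper gets the explicit lattice embedding, you get a quicker proof of the stated claim.

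There is, however, one under-justified step. You assert that the positive solutions of $3t^2-2=w^2$ are exactly $t=u_{n+1}-u_n$, calling this ``a one-line consequence of the defining recurrence, via Lemma~\ref{L:xrec}.'' That lemma (identity (a) plus the recurrence) does give one direction, namely $3(u_{n+1}-u_n)^2-2=(u_{n+1}+u_n)^2$, so these $t$ \emph{are} solutions; but it cannot give completeness. That \emph{every} solution of this Pell-type equation arises this way needs the standard descent/fundamental-unit argument (all elements of norm $-2$ in $\Z[\sqrt3\,]$ are $\pm(1+\sqrt3)(2+\sqrt3)^n$), which is exactly the kind of classical fact the paper disposes of by citing \cite{De} for $q^2-3u^2=1$. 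Relatedly, your final identification ``these are precisely the parameters listed in the table'' leans on tabulated data; to be self-contained you should check that the example's parallelogram $OA_nB_nC_n$ has $(a,b)=\bigl(3t_n-w_n,\,3t_n+w_n\bigr)$ with $t_n=u_{n+1}-u_n$ and $w_n=u_{n+1}+u_n$, i.e.\ $(a,b)=\bigl(2(2u_n-u_{n-1}),\,2(2u_{n+1}-u_n)\bigr)$, a short computation from Lemma~\ref{L:xrec}. Both repairs are routine, so I regard your proof as correct in outline with these two details to be filled in.
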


\begin{proof} By translating and reflecting in the $x$ and/or $y$ axes if necessary, we may assume that the horizontal diagonal lies on the positive $x$-axis, starting at the origin $0$, and that the side starting at $0$, and lying in the 3rd or 4th quadrants, is the shorter of the two sides. Therefore, suppose we have a ELEP with vertices $A=x-y\omega, B=z,C=(z-x)+y\omega, O=0$, where $x\in\Z$ and $y,z\in \N$. Let $a\sqrt3,b\sqrt3$ denote the lengths of $OA$ and $AB$ respectively, with $a,b\in\N$ and $a<b$. In particular, we have
\begin{equation}\label{E:OA}
3a^2=x^2+xy+y^2.
\end{equation}
The altitudes from $A$ and $C$ have length $\eta:=\frac{y\sqrt3}2$. By Theorem~\ref{T:altbd},  $2<\eta\le 2\sqrt3$, which gives $\frac{4}{\sqrt3}<y\le 4$. So, as $y\in\N$, we have $y=3$ or $4$. First suppose that $y=3$. Then \eqref{E:OA} gives
\begin{equation}\label{E:a}
3a^2=x^2+3x+9.
\end{equation}
 So $x$ is divisible by $3$, say $x=3x'$. But then by \eqref{E:a}, 
 $a$ is divisible by $3$, contrary to Theorem~\ref{T:345}. So we have $y=4$.

As $y=4$, we have $\eta=2\sqrt3 > \frac6{\sqrt7}$. So $\eta=\eta_l$ by Theorem~\ref{T:altbd}, and from the proof of Theorem~\ref{T:altbd} we have  $s^2<3t^2$ and
$\eta_l^2=4+\frac{8}{3s^2-2 }$. Hence $(2\sqrt3)^2= 4+\frac{8}{3s^2-2 }$, giving $s=1$.
Then by Theorem~\ref{T:345},
$(a,b)=2(q, r) \in\N^2$ where 
\begin{equation}\label{E:sdsl}
1 + 3t^2 + 2q^2 = 6tq
 \end{equation}
 and $q \le r =3t-q$. Let $u:=t-q$. From \eqref{E:sdsl}, we have
\begin{equation}\label{E:pell}
 q^2-3u^2=1. 
 \end{equation}
 Notice that $u\ge 0$. Indeed, from \eqref{E:sdsl}, we have
 $q=\frac{3t-\sqrt{3t^2-2}}2$, so $u=\frac{-t+\sqrt{3t^2-2}}2\ge 0$, for $t\ge 1$.
 Notice also that $a^2=4q^2$ and \eqref{E:OA} gives $3a^2=x^2+4x+16$, so $12q^2=x^2+4x+16$. Thus $x$ is even, say $x=2x'$, and so $3q^2=x'^2+2x'+4$. Hence, by \eqref{E:pell}, $9u^2=x'^2+2x'+1=(x'+1)^2$, so
\begin{equation}\label{E:2c}
x=2x'=\pm 6u-2.
\end{equation}

Now \eqref{E:pell} is one of Pell's equations and it is well known (see \cite{De}) that the solutions $(q_n,u_n)$ to \eqref{E:pell} are given by the recurrence relation
 \[
 (q_{n},u_n)=4(q_{n-1},u_{n-1})-(q_{n-2},u_{n-2}),\qquad (q_{1},u_1)=(2,1),\ (q_{0},u_0)=(1,0).
 \]
In particular, $u_n$ agrees with the sequence $u_n$ of Example~\ref{E:hor}. Notice also that 
\begin{equation}\label{E:qu}
q_n+2u_n = u_{n+1}, \quad\text{for all } n\ge 0.
 \end{equation}
Indeed, both sides of the equation satisfy the same recurrence relation, with the initial conditions $q_0+2u_0 = 1= u_1$ and $q_1+2u_1 =4= u_2$.

For each $n\ge 0$, we denote by $a_n,b_n,x_n,z_n,t_n$ the values of $a,b,x,z,t$, respectively, determined by $q_n$ and $u_n$, and we denote the corresponding parallelogram  $OA_nB_nC_n$. Since there are so many variables, we recall for the reader's convenience that 
$A_n=x_n-4\omega, B_n=z_n, C=(z_n-x_n)+4\omega$, the lengths of $OA_n, OC_n$ are $\sqrt3a_n,\sqrt3b_n$ respectively, and from the definition of $u$, we also have $a_n+b_n=6t_n$ and $t_n=u_n+q_n$. Furthermore, $a_n=2q_n$ and 
\begin{equation}\label{E:bn}
b_n=6t_n-a_n=6t_n-2q_n=6u_n+4q_n.
\end{equation}
Since $A_n$ has $y$-coordinate $-2\sqrt3$, the area of $OA_nB_nC_n$ is $2\sqrt3 z_n$, so the equability condition is $2\sqrt3 z_n=2(a_n+b_n)\sqrt3$; that is, $z_n=a_n+b_n=6t_n$. 
So $z_n=6(u_n+q_n)$. 

We now consider the two cases given by \eqref{E:2c}: (i) $x_n=6u_n-2$ for $u_n>0$ and (ii) $x_n=- 6u_n-2$ for $u_n\ge0$.

(i). We show that this case leads to a contradiction. Suppose $x_n=6u_n-2$  for $u_n>0$, so $A_n=6u_n-2-4\omega, B_n=z_n=6(u_n+q_n)$ and $C_n= 6q_n+2+4\omega$. So, from \eqref{E:bn},
\[
3\cdot (6u_n+4q_n)^2=3b_n^2=\overline{OC_n}^2=(6q_n+2)^2-(6q_n+2)4+16,
\]
where the expression on the right comes for the square length of the element $C_n=6q_n+2+4\omega$. Expanding, rearranging and dividing by 12 gives $q_n^2 + 12 q_n u_n + 9 u_n^2=1$. But this is impossible for positive integers $u_n$ and $q_n$. 

(ii). Now suppose $x_n=- 6u_n-2$ for $u_n\ge0$. Using \eqref{E:qu}, we have $A_n=-6u_n-2-4\omega, B_n=z_n=6(u_n+q_n)=6u_{n+1} -6u_{n}$,
and $C_n=6u_{n+1} + 2  + 4\omega$, which is one of the parallelograms of Example~\ref{E:hor}. 
\end{proof}

Since the parallelograms of Example~\ref{E:hor} have horizontal short diagonal, we have the following conclusion.

 \begin{corollary} No ELEP has a horizontal long diagonal. 
\end{corollary}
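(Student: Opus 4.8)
The plan is to combine Theorem~\ref{T:hor} with a direct identification of which diagonal in the family $OA_nB_nC_n$ of Example~\ref{E:hor} is the longer one. By Theorem~\ref{T:hor}, every ELEP possessing a horizontal diagonal is, up to Euclidean isometry, one of the parallelograms $OA_nB_nC_n$, and in each of these the horizontal diagonal is precisely $OB_n$. Hence the corollary reduces to the single claim that $OB_n$ is always the \emph{short} diagonal $d_s$, never the long diagonal $d_l$.

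First I would recall from the proof of Theorem~\ref{T:hor} that these parallelograms all arise with $s=1$ and lie in the case $s^2<3t^2$ (this holds automatically, since $s^2=1<3t^2$ for every $t\ge 1$). Then Remark~\ref{R:diags}, specialized to the regime $s^2<3t^2$ with $s=1$, gives
\[
d_s^2=12(9s^2t^2-6t^2)=36t^2,\qquad d_l^2=12(9t^2-2).
\]
The strict inequality $d_s<d_l$ is immediate, since $12(9t^2-2)>36t^2$ is equivalent to $72t^2>24$, i.e.\ $3t^2>1$, which holds for all $t\ge 1$.

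Next I would match this against the explicit geometry. The horizontal diagonal $OB_n$ has length $z_n=6t_n$, as computed in the proof of Theorem~\ref{T:hor} from the equability condition; equivalently, $B_n=(6u_{n+1}-6u_n,0)$ gives $|OB_n|=6(u_n+q_n)=6t_n$. Thus $|OB_n|^2=36t_n^2=d_s^2$, so the horizontal diagonal is exactly the short diagonal of $OA_nB_nC_n$.

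Combining these observations yields the conclusion: any ELEP with a horizontal diagonal is isometric to some $OA_nB_nC_n$ whose horizontal diagonal is $d_s$, so no ELEP can have a horizontal long diagonal. The main obstacle here is essentially only bookkeeping, namely correctly pairing the two formulas of Remark~\ref{R:diags} in the regime $s^2<3t^2$ with the short and long diagonals and verifying $z_n=6t_n$ is the one equal to $d_s=6t$; no substantive new difficulty arises beyond confirming the strict inequality $d_s<d_l$.
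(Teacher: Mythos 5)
Your proposal is correct and follows the same route as the paper: reduce to the parallelograms $OA_nB_nC_n$ of Example~\ref{E:hor} via Theorem~\ref{T:hor}, then observe that their horizontal diagonal $OB_n$ is the short one. The paper simply asserts this last fact in one sentence, whereas you verify it explicitly (correctly) from Remark~\ref{R:diags} with $s=1$ and the identity $|OB_n|=6t_n$; this is a fuller write-up of the same argument rather than a different approach.
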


\begin{remark}
The upper bounds $\eta_s= \frac6{\sqrt7}$  and $\eta_l= 2\sqrt3$ of Theorem~\ref{T:altbd} are both attained by the first of the parallelograms of Example~\ref{E:hor}; that is, the root parallelogram $(a,b)=(2,4)$ given by $n=0$ in Table~\ref{T:hor}.
\end{remark}


\section{ELEPs with a vertical diagonal}\label{S:vertd}

\begin{theorem}\label{T:verd}
There is no ELEP having a vertical diagonal.
\end{theorem}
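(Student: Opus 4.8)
The plan is to mirror the structure of the proof of Theorem~\ref{T:hor}, exploiting the fact that on the Eisenstein lattice a vertical segment has a very constrained form. First I would set up coordinates as in the horizontal case: by translating and reflecting, assume the vertical diagonal lies on the positive $y$-axis starting at the origin. Using Remark~\ref{R:integers}, every lattice vector of length $\sqrt3 n$ with $n$ even can be written as $x+y\sqrt3 i$ with $x,y\in\Z$; since the diagonal is vertical, its endpoint is $0+y\sqrt3 i$ for some $y\in\N$, and the two sides emanating from $0$ are lattice vectors $x_1+y_1\sqrt3 i$ and $x_2+y_2\sqrt3 i$. The key constraint is that the altitude from the side to the diagonal is now the horizontal distance, i.e.\ $\eta=\frac{|x|\sqrt3}{?}$ — more precisely the altitude of each half-triangle onto the vertical diagonal is the absolute value of the $x$-coordinate (the real part) of the opposite vertex.

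Next I would extract the arithmetic obstruction. The altitude onto the vertical diagonal equals $|x|$ (the horizontal offset of the far vertex), and by Theorem~\ref{T:altbd} this altitude is bounded, $2<\eta\le 2\sqrt3$, forcing $|x|$ into a tiny finite set of possibilities. For each such value of $x$, I would write down the square-length condition $3n^2=x^2-xy+y^2$ (equivalently $3a^2=$ the expression from Remark~\ref{R:integers}) for the side starting at the origin, exactly as in \eqref{E:OA}, together with the equability/area condition that pins down the length of the vertical diagonal. The heart of the argument is then a divisibility contradiction: I expect that for each admissible horizontal offset, reducing the side-length equation modulo $3$ (or modulo $9$) forces $3\mid a$, which contradicts the conclusion $3\nmid ab$ of Theorem~\ref{T:345}. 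This is precisely the mechanism that killed the $y=3$ case in Theorem~\ref{T:hor}, and I anticipate that \emph{every} case for the vertical diagonal collapses in this way, which is why no ELEP survives.

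The main obstacle will be handling the altitude correctly for a \emph{diagonal} rather than a side, and making sure the parity/shape constraints from Remark~\ref{R:integers} are applied consistently. Because the diagonal's length $d$ satisfies $d^2\in 3\N$ (from Lemma~\ref{L:diag}, $d^2=3D$) while a \emph{vertical} Eisenstein vector $y\sqrt3 i$ has squared length $3y^2$, the vertical diagonal is automatically of the form $y\sqrt3$; the subtlety is that Remark~\ref{R:diags} shows one family of diagonals has length $6t\sqrt{3s^2-2}$ and the other $2s\sqrt{27t^2-6}$, and neither of these is a rational multiple of $\sqrt3$ unless a strong condition holds. I would therefore like to short-circuit the coordinate computation entirely: a vertical diagonal must have $d^2=3y^2$ for some integer $y$, so $3\mid d^2$ forces, via \eqref{E:dlds}, either $3s^2-2$ or $27t^2-6$ to be $3$ times a perfect square. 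The latter gives $9t^2-2=y'^2$, impossible mod $3$; the former gives $6t^2=\text{(something)}$, and tracing the $3$-adic valuations as in the Remark after Lemma~\ref{L:diag} (where $\nu_3(d_l^2)\not\equiv\nu_3(d_s^2)\pmod 2$) should rule it out. I would choose whichever of these two routes — the explicit coordinate/divisibility argument à la Theorem~\ref{T:hor}, or the $\nu_3$-valuation argument via Remark~\ref{R:diags} — gives the cleanest contradiction, and I expect the valuation argument to be the slicker of the two.
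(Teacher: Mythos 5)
Your coordinate route is, in outline, the paper's own proof: put the vertical diagonal on the positive $y$-axis, observe that the altitude from each off-diagonal vertex is its horizontal offset, use Theorem~\ref{T:altbd} to force that offset to equal $3$, and then derive a contradiction with Theorem~\ref{T:345}. But the mod-$3$ collapse you predict (``every case collapses this way'') is not what happens. The altitude equals the integer $|x|$ only if you write the side vector in the coordinates of Remark~\ref{R:integers}, $A=x+y\sqrt3 i$ (legitimate, since $a,b$ are even), and in those coordinates the squared length is $x^2+3y^2$, \emph{not} $x^2-xy+y^2$; your parenthetical ``equivalently'' conflates two quadratic forms attached to different parametrizations, and the difference is fatal here. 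With the correct form, $|x|=3$ gives $3a^2=9+3y^2$, i.e.\ $a^2-y^2=3$, which is not impossible: it has the solution $a=2$, $|y|=1$ (the lattice vector $3+\sqrt3 i=4+2\omega$ has length $2\sqrt3$), so no congruence on a single side can finish. One must bring in more: for instance, the same computation applied to the other side $C=-x+(z-y)\sqrt3 i$ forces $b=2$ as well, so the parallelogram would be an equable rhombus, contradicting Corollary~\ref{C:norhombus}; alternatively, $\eta=3$ is incompatible with the formulas $\eta^2=4+8/(3s^2-2)$ or $\eta^2=4+8/(9t^2-2)$ from the proof of Theorem~\ref{T:altbd}. (Note that the paper's own write-up pairs \eqref{E:OAv} with $\eta=x$, which is the same conflation, so mirroring it verbatim would not have rescued the argument.)

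Your preferred route, via \eqref{E:dlds}, is genuinely different from the paper's and can be made to work, but as written your two cases are swapped and the decisive one is left open. A vertical diagonal is a vertical lattice vector, hence of the form $z\sqrt3 i$, so $d^2=3z^2$; by \eqref{E:dlds} either $36t^2(3s^2-2)=3z^2$, i.e.\ $3(3s^2-2)$ is a perfect square, or $12s^2(9t^2-2)=3z^2$, i.e.\ $9t^2-2$ is a perfect square. It is the \emph{first} case that dies mod $3$: a square divisible by $3$ is divisible by $9$, while $3s^2-2\equiv1\pmod3$. The second case --- the one you assert is ``impossible mod $3$'' --- is not: $9t^2-2\equiv1\pmod3$, and even mod $9$ it is $\equiv 7\equiv 4^2$, a residue. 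It must be excluded differently, e.g.\ $9t^2-2=k^2$ gives $(3t-k)(3t+k)=2$, impossible in positive integers (or: $t$ is odd by Theorem~\ref{T:345}, so $9t^2-2\equiv 7\pmod 8$, not a square mod $8$). Your suggested rescue by $3$-adic valuations also cannot close this case: $\nu_3\bigl(12s^2(9t^2-2)\bigr)=1+2\nu_3(s)$ is always odd, which is exactly the parity of $\nu_3(3z^2)$, so the valuation poses no obstruction to that diagonal being vertical. In short, both of your proposed finishes fail precisely on the case that survives; each route is repairable as indicated, but neither, as written, is a proof.
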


\begin{proof} Suppose we have an ELEP with a vertical diagonal.
By translating and reflecting in the $x$ and/or $y$ axes if necessary, we may assume that the vertical diagonal lies on the positive $y$-axis, starting at the origin $0$, and that the side starting at $0$, and lying in the 1st or 2nd quadrants, is the shorter of the two sides. Therefore, suppose we have an ELEP with vertices $A=x+y\omega, B=z\omega,C=-x+(z-y)\omega, O=0$, where $x,z\in\N$ and $y\in \Z$. Let $a\sqrt3,b\sqrt3$ denote the lengths of $OA$ and $AB$ respectively, with $a,b\in\N$ and $a<b$. In particular, we have
\begin{equation}\label{E:OAv}
3a^2=x^2-xy+y^2.
\end{equation}
The altitudes from $A$ and $C$ have length $\eta:=x$. By Theorem~\ref{T:altbd},  $2<\eta\le 2\sqrt3$, which gives $x=3$. 
Then \eqref{E:OAv} gives
\begin{equation}\label{E:av}
3a^2=9-3y+y^2.
\end{equation}
 So $y$ is divisible by $3$, say $y=3y'$. But then by \eqref{E:av}, 
 $a$ is divisible by $3$, contrary to Theorem~\ref{T:345}. 
\end{proof}

\section{ELEPs with a vertical side}\label{S:verts} 

\begin{example}\label{E:verts2}
Consider the Pell-like equation 
\begin{equation}\label{E:Pell}
3u^2=2m^2+1.
\end{equation}
This equation is well known; see entry A072256 in \cite{OEIS}.
Its solutions $(u_n,m_n)$ satisfy the recurrence relation 
\[
(u_n,m_n) = 10(u_{n-1},m_{n-1})  - (u_{n-2},m_{n-2}),
\]
with $(u_1,m_{1}) = (1,1), (u_2,m_{2})=(9,11)$.

Now, using complex numbers, consider the parallelogram with vertices 
$A_n=2\sqrt3i,B_n=-x_n+(2+y_n)\sqrt3i,C_n=-x_n+y_n\sqrt3i, O=0$, where  
$x_n=3+3u_n$ and $y_n=2m_n$. Note that the vertical side $OA_n$ has length $a\sqrt3$, where $a:=2$, while the side $OC_n$ has length 
\begin{align*}
b_n\sqrt3=\sqrt{x_n^2+3y_n^2}&=\sqrt{(3+3u_n)^2+12m_n^2}\\&=\sqrt{9+18u_n+ 9u_n^2+6(3u_n^2-1)}\quad(\text{by }\eqref{E:Pell})\\
&=\sqrt3(1+3u_n).
\end{align*}
 So $OA_nB_nC_n$ has perimeter
$2(a+b_n)\sqrt3
=6\sqrt3(1+u_n)$.
Furthermore, $OA_nB_nC_n$ has area $a\sqrt3x_n=2x_n\sqrt3=6\sqrt3(1+u_n)$. So $OA_nB_nC_n$ is equable.

Table \ref{T:vs} lists the first 9 of these examples. 
The first five of these ELEPs  appear in Figure~\ref{F:a4}, and in Figure~\ref{F:tree}, in the branch that starts at the root $(2,4)$ and proceeds horizontally to the right. The values of $(s,t)$ were computed using \eqref{E:phimatrices}.
Figure~\ref{F:vs} shows the first two examples, with the first one reflected in the $y$-axis.

\begin{table}[h]
\begin{center}
\begin{tabular}{c|cccccc}
  \hline
  $n$ & $(u,m)$ &  $b$  & $x$ & $y$  & $(s,t)$ \\
  \hline
  $1$ & $(1,1)$ &  $4$ & $6$ &$2$ &  $(1,1)$\\
$2$ & $(9,11)$ &   $28$ & $30$ &$22$ & $(5,1)$\\
$3$ & $(89,109)$ &  $268$ & $270$ &$218$ &  $(5,9)$\\
$4$ & $(881,1079)$ &  $2644$ & $2646$ &$2158$ &  $(49,9)$\\
$5$ & $(8721,10681)$ &  $26164$ & $26166$ &$21362$ &  $(49,89)$\\
$6$ & $(86329,105731)$ &  $258988$ & $258990$ &$211462$ &  $(485,89)$\\
$7$ & $(854569,1046629)$ &  $2563708$ & $2563710$ &$2093258$ &  $(485, 881)$\\
$8$ & $(8459361,10360559)$ &  $25378084$ & $25378086$ &$20721118$ &  $(4801, 881)$\\
$9$ & $(83739041,102558961)$ &  $2486793150$ & $2030458102$ & $205117922$ & $(4801, 8721)$\\
 \hline
\end{tabular}
\end{center}
\caption{Examples of ELEPs with vertical side of length $2\sqrt3$ and vertex $C=-x+y\sqrt3i$}
\label{T:vs}
\end{table}

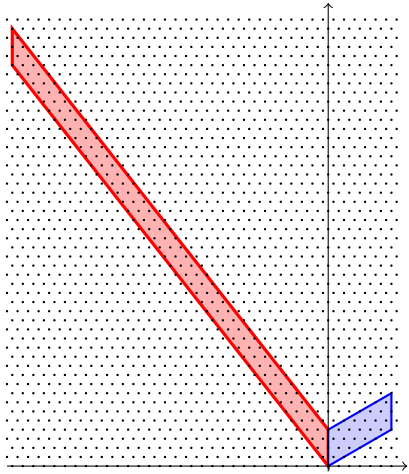
\begin{figure}[h]
\begin{tikzpicture}[scale=.14]
\def\r{1.732};

\draw [fill, red!30] (0,0) 
     -- (0,4*\r/2) 
  -- (-30, 48*\r/2)  
 -- (-30, 44*\r/2)  
  -- cycle;

\draw [very thick, red] (0,0) 
    -- (0,4*\r/2) 
  -- (-30, 48*\r/2)  
 -- (-30, 44*\r/2)  
 -- cycle;

\draw [fill, blue!20] (0,0)
   -- (0,4*\r/2) 
  -- (6, 8*\r/2)  
 -- (6, 4*\r/2)  
   -- cycle;

\draw [thick, blue] (0,0)
   -- (0,4*\r/2) 
  -- (6, 8*\r/2)  
 -- (6, 4*\r/2)  
   -- cycle;

\foreach \i in {-30,...,6}
\foreach \j in {0,...,24}
\draw (\i,\j*\r) circle (.06);
\foreach \i in {-30,...,7}
\foreach \j in {0,...,24}
\draw (\i-1/2,\r*\j+\r/2) circle (.06);

 \draw [->] (-30.5,0) -- (7.5,0);
 \draw [->] (0,-.5) -- (0,44);

  \end{tikzpicture}
\caption{Two ELEPs having a vertical side of length $2\sqrt3$.}\label{F:vs}
\end{figure}
\end{example}

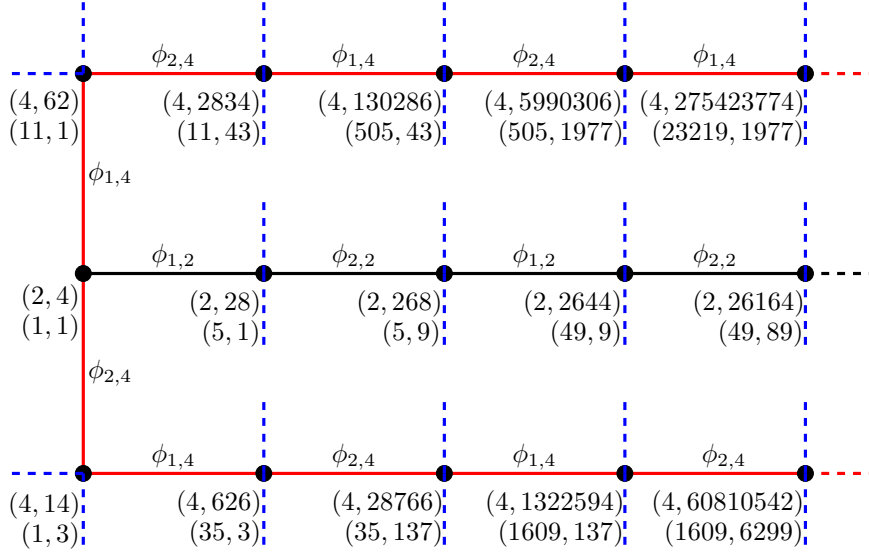
\begin{figure}[h]
\begin{tikzpicture}[scale=2][line cap=round,line join=round,>=triangle 45,x=1cm,y=1cm]
\def\fx{0};
\def\fy{0};
\def\rx{1.2};
\def\ry{0};
\def\rrx{2.4};
\def\rry{0};
\def\rrrx{3.6};
\def\rrry{0};
\def\rrrrx{4.8};
\def\rrrry{0};
\def\rrrrrx{5};
\def\rrrrry{0};
\def\rux{1};
\def\ruy{1};
\def\rdx{1};
\def\rdy{-1};
\def\rrux{2};
\def\rruy{2};
\def\rrdx{2};
\def\rrdy{-2};
\def\ux{0};
\def\uy{1};
\def\uuy{2};
\def\uuuy{3};
\def\dx{0};
\def\dy{-1};
\def\ddy{-2};
\def\dddy{-3};
\draw [line width=1.2pt,color=red] (\fx,\fy)-- (\ux,\uy*1.33-.05);
\draw [line width=1.2pt,color=red] (\fx,\fy)-- (\dx,\dy*1.33+.05);
\draw [line width=1.2pt,color=red] (\fx,\uy*1.33)-- (\rrrrx,\uy*1.33);
\draw [line width=1.2pt,color=red] (\fx,\dy*1.33)-- (\rrrrx,\dy*1.33);
\draw [line width=1.2pt,dashed,color=red] (\rrrrx,\uy*1.33)-- (\rrrrx+.5,\uy*1.33);
\draw [line width=1.2pt,dashed,color=red] (\rrrrx,\dy*1.33)-- (\rrrrx+.5,\dy*1.33);

\draw [line width=1.2pt,color=black] (\fx,\fy)--  (\rrrrx,\rrrry);
\draw [line width=1.2pt,dashed,color=black] (\rrrrx,\rrrry)-- (\rrrrx+.5,\rrrry);

\draw [fill=black] (\fx,\fy) circle (1.5pt);
\draw[color=black] (\fx-.22,\fy-.16)  node {$(2,4)$};
\draw[color=black] (\fx-.22,\fy-.36)  node {$(1,1)$};
\draw[color=black] (\fx+.6,\fy+.11)  node {$\phi_{1,2}$};
\draw [fill=black] (\rx,\ry) circle (1.5pt);
\draw[color=black] (\rx-.26,\ry-.2) node {$(2,28)$};
\draw[color=black] (\rx-.22,\ry-.4) node {$(5,1)$};
\draw[color=black] (\rx+.6,\ry+.11)  node {$\phi_{2,2}$};
\draw [fill=black] (\rrx,\rry) circle (1.5pt);
\draw[color=black] (\rrx-.3,\rry-.2) node {$(2,268)$};
\draw[color=black] (\rrx-.22,\rry-.4) node {$(5,9)$};
\draw[color=black] (\rrx+.6,\rry+.11)  node {$\phi_{1,2}$};
\draw [fill=black] (\rrrx,\rrry) circle (1.5pt);
\draw[color=black] (\rrrx-.35,\rrry-.2) node {$(2,2644)$};
\draw[color=black] (\rrrx-.26,\rrry-.4) node {$(49,9)$};
\draw[color=black] (\rrrx+.6,\rrry+.11)  node {$\phi_{2,2}$};
\draw [fill=black] (\rrrrx,\rrrry) circle (1.5pt);
\draw[color=black] (\rrrrx-.38,\rrrry-.2) node {$(2,26164)$};
\draw[color=black] (\rrrrx-.3,\rrrry-.4) node {$(49,89)$};
\draw[color=black] (\fx-.26,\uuy*.67-.2) node {$(4,62)$};
\draw[color=black] (\fx-.26,\uuy*.67-.4) node {$(11,1)$};
\draw[color=black] (\fx+.18,\uuy*.33) node {$\phi_{1,4}$};
\draw[color=black] (\fx-.26,\ddy*.67-.2) node {$(4,14)$};
\draw[color=black] (\fx-.22,\ddy*.67-.4) node {$(1,3)$};
\draw[color=black] (\fx+.18,\ddy*.33) node {$\phi_{2,4}$};

\draw[color=black] (\fx+.6,\dy*1.33+.11)  node {$\phi_{1,4}$};
\draw[color=black] (\rx-.3,\dy*1.33-.2) node {$(4,626)$};
\draw[color=black] (\rx-.26,\dy*1.33-.4) node {$(35,3)$};
\draw[color=black] (\rx+.6,\dy*1.33+.11)  node {$\phi_{2,4}$};
\draw[color=black] (\rrx-.38,\dy*1.33-.2) node {$(4,28766)$};
\draw[color=black] (\rrx-.34,\dy*1.33-.4) node {$(35,137)$};
\draw[color=black] (\rrx+.6,\dy*1.33+.11)  node {$\phi_{1,4}$};
\draw[color=black] (\rrrx-.47,\dy*1.33-.2) node {$(4,1322594)$};
\draw[color=black] (\rrrx-.43,\dy*1.33-.4) node {$(1609,137)$};
\draw[color=black] (\rrrx+.66,\dy*1.33+.11)  node {$\phi_{2,4}$};
\draw[color=black] (\rrrrx-.52,\dy*1.33-.2) node {$(4,60810542)$};
\draw[color=black] (\rrrrx-.48,\dy*1.33-.4) node {$(1609,6299)$};

\draw[color=black] (\fx+.6,\uy*1.33+.11)  node {$\phi_{2,4}$};
\draw[color=black] (\rx-.35,\uy*1.33-.2) node {$(4,2834)$};
\draw[color=black] (\rx-.3,\uy*1.33-.4) node {$(11,43)$};
\draw[color=black] (\rx+.6,\uy*1.33+.11)  node {$\phi_{1,4}$};
\draw[color=black] (\rrx-.43,\uy*1.33-.2) node {$(4,130286)$};
\draw[color=black] (\rrx-.34,\uy*1.33-.4) node {$(505, 43)$};
\draw[color=black] (\rrx+.6,\uy*1.33+.11)  node {$\phi_{2,4}$};
\draw[color=black] (\rrrx-.48,\uy*1.33-.2) node {$(4,5990306)$};
\draw[color=black] (\rrrx-.43,\uy*1.33-.4) node {$(505, 1977)$};
\draw[color=black] (\rrrx+.6,\uy*1.33+.11)  node {$\phi_{1,4}$};
\draw[color=black] (\rrrrx-.56,\uy*1.33-.2) node {$(4,275423774)$};
\draw[color=black] (\rrrrx-.52,\uy*1.33-.4) node {$(23219, 1977)$};

\draw [fill=black] (\fx,\uy*1.33) circle (1.5pt);
\draw [fill=black] (\rx,\uy*1.33) circle (1.5pt);
\draw [fill=black] (\rrx,\uy*1.33) circle (1.5pt);
\draw [fill=black] (\rrrx,\uy*1.33) circle (1.5pt);
\draw [fill=black] (\rrrrx,\uy*1.33) circle (1.5pt);
\draw [fill=black] (\fx,\dy*1.33) circle (1.5pt);
\draw [fill=black] (\rx,\dy*1.33) circle (1.5pt);
\draw [fill=black] (\rrx,\dy*1.33) circle (1.5pt);
\draw [fill=black] (\rrrx,\dy*1.33) circle (1.5pt);
\draw [fill=black] (\rrrrx,\dy*1.33) circle (1.5pt);
\draw [line width=1.2pt,dashed,color=qqqqff] (\fx,\uy*1.33)-- (\fx,\uy*1.33+.5);
\draw [line width=1.2pt,dashed,color=qqqqff] (\fx,\uy*1.33)-- (\fx-.5,\uy*1.33);
\draw [line width=1.2pt,dashed,color=qqqqff] (\fx,\dy*1.33)-- (\fx,\dy*1.33-.5);
\draw [line width=1.2pt,dashed,color=qqqqff] (\fx,\dy*1.33)-- (\fx-.5,\dy*1.33);
\draw [line width=1.2pt,dashed,color=qqqqff] (\rrrx,\rrry)-- (\rrrx,\rrry+.5);
\draw [line width=1.2pt,dashed,color=qqqqff] (\rrrx,\rrry)-- (\rrrx,\rrry-.5);
\draw [line width=1.2pt,dashed,color=qqqqff] (\rrrrx,\rrrry)-- (\rrrrx,\rrrry+.5);
\draw [line width=1.2pt,dashed,color=qqqqff] (\rrrrx,\rrrry)-- (\rrrrx,\rrrry-.5);
\draw [line width=1.2pt,dashed,color=qqqqff] (\rx,\ry)--  (\rx,\ry+.5);
\draw [line width=1.2pt,dashed,color=qqqqff] (\rx,\ry)-- (\rx,\ry-.5);
\draw [line width=1.2pt,dashed,color=qqqqff] (\rrx,\rry)-- (\rrx,\rry+.5);
\draw [line width=1.2pt,dashed,color=qqqqff] (\rrx,\rry)-- (\rrx,\rry-.5);

\draw [line width=1.2pt,dashed,color=qqqqff] (\rx,\uy*1.33)-- (\rx,\uy*1.33+.5);
\draw [line width=1.2pt,dashed,color=qqqqff] (\rrx,\uy*1.33)-- (\rrx,\uy*1.33+.5);
\draw [line width=1.2pt,dashed,color=qqqqff] (\rrrx,\uy*1.33)-- (\rrrx,\uy*1.33+.5);
\draw [line width=1.2pt,dashed,color=qqqqff] (\rrrrx,\uy*1.33)-- (\rrrrx,\uy*1.33+.5);

\draw [line width=1.2pt,dashed,color=qqqqff] (\rx,\uy*1.33)-- (\rx,\uy*1.33-.5);
\draw [line width=1.2pt,dashed,color=qqqqff] (\rrx,\uy*1.33)-- (\rrx,\uy*1.33-.5);
\draw [line width=1.2pt,dashed,color=qqqqff] (\rrrx,\uy*1.33)-- (\rrrx,\uy*1.33-.5);
\draw [line width=1.2pt,dashed,color=qqqqff] (\rrrrx,\uy*1.33)-- (\rrrrx,\uy*1.33-.5);

\draw [line width=1.2pt,dashed,color=qqqqff] (\rx,\dy*1.33)-- (\rx,\dy*1.33+.5);
\draw [line width=1.2pt,dashed,color=qqqqff] (\rrx,\dy*1.33)-- (\rrx,\dy*1.33+.5);
\draw [line width=1.2pt,dashed,color=qqqqff] (\rrrx,\dy*1.33)-- (\rrrx,\dy*1.33+.5);
\draw [line width=1.2pt,dashed,color=qqqqff] (\rrrrx,\dy*1.33)-- (\rrrrx,\dy*1.33+.5);

\draw [line width=1.2pt,dashed,color=qqqqff] (\rx,\dy*1.33)-- (\rx,\dy*1.33-.5);
\draw [line width=1.2pt,dashed,color=qqqqff] (\rrx,\dy*1.33)-- (\rrx,\dy*1.33-.5);
\draw [line width=1.2pt,dashed,color=qqqqff] (\rrrx,\dy*1.33)-- (\rrrx,\dy*1.33-.5);
\draw [line width=1.2pt,dashed,color=qqqqff] (\rrrrx,\dy*1.33)-- (\rrrrx,\dy*1.33-.5);


\end{tikzpicture}
\caption{The paths of ELEPs  having a side of length $2$ or $4$.}\label{F:a4}
\end{figure}

\begin{example}\label{E:verts4}
We now give two sequences of ELEPs having a vertical side of length $4\sqrt3$.
Consider the equation
\begin{equation}\label{E:notPell}
132 w^2+ 36 w+1=y^2.
\end{equation}
This equation is not particularly well known. The first six solutions for $(w,y)$ are:
\[
\{1,13\},\ \{5,59\},\ \{52,599\},\ \{236,2713\},\ \{2397,27541\},\ \{10857,124739\}. 
\]
In fact, using Alpern's integer equation solver \cite{Alpern}, one finds that the sequence of solutions is composed of two interspersed sequences, having the same recurrence relation 
\[
\begin{pmatrix}w_n\\y_n\end{pmatrix}= \begin{pmatrix}23&2\\264&23\end{pmatrix}\begin{pmatrix}w_{n-1}\\y_{n-1}\end{pmatrix}+
\begin{pmatrix}3\\36\end{pmatrix},
\]
but different initial conditions. One sequence  has $(w_1,y_{1}) = (1,13)$, while the other has $(w_1,y_{1}) = (5,59)$.

Now, for each of the two sequences,  consider the parallelogram with vertices 
$A_n=4\sqrt3i,B_n=-x_n+(4+y_n)\sqrt3i,C_n=-x_n+y_n\sqrt3i, O=0$, where  
$x_n=6w_n+3$. Note that the vertical side $OA_n$ has length $a:=4\sqrt3$, while the side $OC_n$ has length 
\begin{align*}
b_n\sqrt3=\sqrt{x_n^2+3y_n^2}&=\sqrt{(6w_n+3)^2+3y_n^2}\\&=\sqrt{(6w_n+3)^2+3(132 w_n^2+ 36 w_n+1)}\quad(\text{by }\eqref{E:notPell})\\
&=2\sqrt3(1+6w_n).
\end{align*}
 So $OA_nB_nC_n$ has perimeter
$2(a+b_n)
=12\sqrt3(1+2w_n)$.
Furthermore, $OA_nB_nC_n$ has area $ax_n=4x_n\sqrt3=12\sqrt3(1+2w_n)$. So $OA_nB_nC_n$ is equable.

Table \ref{T:vs1} lists the first six examples of ELEPs corresponding to the  sequence with initial condition $(w_1,y_{1}) = (1,13)$. 
The first five of these ELEPs  appear in Figure~\ref{F:a4} in the branch that starts at the element $(a,b)=(4,14)$ and proceeds horizontally to the right.
Table \ref{T:vs2} lists the first six examples of ELEPs corresponding to the  sequence with initial condition $(w_1,y_{1}) = (5,59)$. 
The first five of these ELEPs  appear in Figure~\ref{F:a4} in the branch that starts at the element $(a,b)=(4,62)$ and proceeds horizontally to the right. Notice that the two sequences are actually the two ends of a bi-infinite path, and are connected by the vertical path of length $2$ that passes from $(4,14)$ through the root $(2,4)$ to $(4,62)$.

\begin{table}[h]
\begin{center}
\begin{tabular}{c|ccccc}
  \hline
  $n$ & $(w,y)$ &  $b$  & $x$   & $(s,t)$ \\
  \hline
  $1$ & $(1,13)$ &  $14$ & $9$ &  $(1,3)$\\
$2$ & $(52,599)$ & $626$ & $315$ & $(35,3)$ \\ 
$3$ & $(2397,27541)$ & $28766$ & $14385$ & $(35,137)$ \\ 
$4$ & $(110216,1266287)$ & $1322594$ & $661299$ & $(1609,137)$ \\ 
$5$ & $(5067545,58221661)$ & $60810542$ & $30405273$ & $(1609,6299)$ \\ 
$6$ & $(232996860,2676930119)$ & $2795962322$ & $1397981163$ & $(73979,6299)$ \\ 
\hline
\end{tabular}
\end{center}
\caption{First sequence of ELEPs with vertical side of length $4\sqrt3$ and vertex $C=-x+y\sqrt3i$}
\label{T:vs1}
\end{table}

\begin{table}[h]
\begin{center}
\begin{tabular}{c|cccccc}
  \hline
  $n$ & $(w,y)$ &  $b$  & $x$   & $(s,t)$ \\
  \hline
  $1$ & $(5,59)$ &  $62$ &$18$ &  $(11,1)$\\
$2$ & $(236,2713)$ & $2834$ & $1419$ & $(11,43)$ \\ 
$3$ & $(10857,124739)$ & $130286$ & $65145$ & $(505,43)$ \\ 
$4$ & $(499192,5735281)$ & $5990306$ & $2995155$ & $(505,1977)$ \\ 
$5$ & $(22951981,263698187)$ & $275423774$ & $137711889$ & $(23219,1977)$ \\ 
$6$ & $(1055291940,12124381321)$ & $12663503282$ & $6331751643$ & $(23219,90899)$ \\ 
\hline
\end{tabular}
\end{center}
\caption{Second sequence of ELEPs with vertical side of length $4\sqrt3$ and vertex $C=-x+y\sqrt3i$}
\label{T:vs2}
\end{table}

\end{example}

\begin{theorem}\label{T:vers}
Up to Euclidean transformations, the only ELEPs having a side of length $2\sqrt3$ or $4\sqrt3$ are those of Example~\ref{E:verts2} and Example~\ref{E:verts4}, respectively.
Moreover, all ELEPs with a vertical side have a side of length $2\sqrt3$ or $4\sqrt3$.
\end{theorem}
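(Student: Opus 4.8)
The plan is to split the statement into its two assertions and treat them separately. By Corollary~\ref{C:norhombus} no ELEP is a rhombus, so always $a<b$; hence a side of length $2\sqrt3$ must be the short side, forcing $a=2$, while a side of length $4\sqrt3$ is either the short side, forcing $a=4$, or the long side with $b=4$, which forces $a=2$, i.e.\ the root $(2,4)$ (the $n=1$ term of Example~\ref{E:verts2}). So the classification reduces to describing all $b$ with $(2,b)\in\mathcal T$ and all $b$ with $(4,b)\in\mathcal T$.

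For $a=2$ I would apply the criterion of Theorem~\ref{T:suff}: $9\cdot4\,b^2-12(b+2)^2=24(b^2-2b-2)$, which is a square if and only if $6\big((b-1)^2-3\big)$ is. A short $2$- and $3$-adic descent on this condition forces $b-1=3u$ and collapses it to $3u^2=2m^2+1$, which is precisely \eqref{E:Pell}; hence $b=1+3u$, as in Example~\ref{E:verts2}. For $a=4$ the criterion gives $9\cdot16\,b^2-12(b+4)^2=12(11b^2-8b-16)$, a square iff $3(11b^2-8b-16)$ is. Since $q=2$ and $\gcd(q,r)=1$ by Theorem~\ref{T:345}, the integer $r=b/2$ is odd, so $b\equiv2\pmod4$; together with the congruence $b\equiv2\pmod3$ read off the square condition this gives $b\equiv2\pmod{12}$, i.e.\ $b=2+12w$, and substituting collapses the condition to $132w^2+36w+1=y^2$, which is \eqref{E:notPell}. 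This matches Example~\ref{E:verts4}.

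It remains to show these equations have exactly the solution sets listed, and I expect the one genuinely delicate point to be the appearance of \emph{two} interleaved sequences when $a=4$. Rather than compute solution classes of $132w^2+36w+1=y^2$ by hand, I would work on the conic $C_q:\ s^2+3t^2+2q^2=6qst$ inside $\Sigma=\{(s,t):s,t\ \text{odd and coprime}\}$, whose two Vieta involutions are exactly the maps $\phi_{1,a},\phi_{2,a}$ of \eqref{E:phimatrices} with $a=2q$. A descent shows every solution reduces under these involutions to a minimal one with $|s^2-3t^2|\le 2q^2$, and a finite check for $q=1,2$ shows the only such solution is $(1,1)$; hence each $C_q$ is a single orbit. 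For $q=1$ the involution $\phi_{2,2}$ fixes $(1,1)$ (Lemma~\ref{L:fix}), so the orbit is a single ray and yields the one sequence of Example~\ref{E:verts2}. For $q=2$ the point $(1,1)$ is fixed by neither involution, so $C_2$ is a bi-infinite path; but at $(1,1)$ the smaller root of $2X^2-6stX+(s^2+3t^2)$ equals $1$, so $(1,1)$ gives $a=2$ (the root) and must be deleted, while every other vertex has smaller root $2$ and gives $a=4$. Deleting the central vertex splits the path into two rays, which both proves and explains the two sequences of Example~\ref{E:verts4}; matching the induced recurrences identifies them termwise.

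For the vertical-side assertion I would argue geometrically, in the spirit of Theorems~\ref{T:hor} and~\ref{T:verd}. A vertical lattice vector is $\pm c\sqrt3\,i$, and since its length is a side length we have $c\in\{a,b\}$; placing this side at the origin, the adjacent side is $P+Q\sqrt3\,i$ with $P,Q\in\Z$ by Remark~\ref{R:integers} (as $a,b$ are even). The parallelogram then has base $c\sqrt3$ and height $|P|$, so its area is $c\sqrt3\,|P|$; equability equates this to the perimeter $2\sqrt3(a+b)$, giving the positive integer $|P|=2(a+b)/c$ and hence $c\mid 2(a+b)$. With $c=a=2q$ this reads $q\mid 2r$, so $q\mid 2$ and $a\in\{2,4\}$; with $c=b=2r$ it reads $r\mid 2q$, so $r\mid 2$ and, as $r>q\ge1$, $r=2$ and $(a,b)=(2,4)$. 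In every case the ELEP has a side of length $2\sqrt3$ or $4\sqrt3$. The only real obstacle in the argument is the completeness of the $a=4$ solution set, which the conic-descent picture handles cleanly.
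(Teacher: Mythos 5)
Your proposal is correct, and it differs from the paper's proof in one substantive place. The skeleton is the same: both reduce a side of length $2\sqrt3$ or $4\sqrt3$ to $a\in\{2,4\}$ (you are slightly more careful, explicitly disposing of the case where $4\sqrt3$ is the \emph{long} side via the root $(2,4)$), both collapse $a=2$ to \eqref{E:Pell} and $a=4$ to \eqref{E:notPell} by the same congruence manipulations, and both settle the vertical-side claim by noting the height to that side is a positive integer and invoking coprimality of $q,r$ from Theorem~\ref{T:345} --- the paper routes this through Remark~\ref{R:widths}, you compute $|P|=2(a+b)/c$ directly, but it is the same divisibility argument. The genuine difference is completeness of the solution sets: the paper outsources it, citing OEIS A072256 for \eqref{E:Pell} and Alpern's solver for \eqref{E:notPell}, whereas you prove it by Vieta descent on the fixed-$q$ conics $C_q$ using the involutions of \eqref{E:phimatrices}; your minimality bound $|s^2-3t^2|\le 2q^2$ is right (the two descent conditions cannot hold simultaneously), and the finite check, using $s^2-3t^2\equiv 6\pmod 8$ and $3\nmid s$, does leave only $(1,1)$. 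This buys a self-contained proof and a structural explanation of the two interleaved sequences for $a=4$: deleting the central vertex $(1,1)$ (which is the root $(2,4)$) from the bi-infinite path $C_2$ yields the two rays of Tables~\ref{T:vs1} and~\ref{T:vs2} --- exactly the phenomenon the paper observes descriptively but never proves. The residual cost is the routine recurrence-matching identifying orbit values with the $b_n$ of Examples~\ref{E:verts2} and~\ref{E:verts4}, plus checking the involutions preserve positivity, oddness and coprimality; all of this checks out.
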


\begin{proof}  Let  $OABC$ be an ELEP with  sides of length $a\sqrt3$ and $b\sqrt3$. First suppose that $a=2$.
Then Lemma~\ref{L:diag} gives the integer
\[
\sqrt{9a^2b^2-12(a+b)^2}=2 \sqrt{6((b-1)^2-3)}\in\N.
\]
So $(b-1)^2-3=6m^2$ for some $m\in\N$. In particular, $b\equiv 1\pmod 3$, say $b=3u+1$, as in Example~\ref{E:verts2}. Thus
$3u^2=2m^2+1$, which is the same equation as \eqref{E:Pell}. Consequently, $b$ is one of the numbers $b_n$ of Example~\ref{E:verts2}. It follows that, up to a Euclidean transformation, the $OABC$ is one of the ELEPs of Example~\ref{E:verts2}.

Now suppose that $a=4$.
Then Lemma~\ref{L:diag} gives the integer
\[
\sqrt{9a^2b^2-12(a+b)^2}=2 \sqrt{3(11b^2-8b-16)}\in\N.
\]
So $11b^2-8b-16=3m^2$ for some $m\in\N$. Writing $b=2r$, as before, we have that $m$ is even, say $m=2\ell$. 
Thus $11r^2-4r-4=3\ell^2$. Note that as $a=2q=4$, we have $q=2$. So $r$ must be odd, since $q,r$ are coprime by Theorem~\ref{T:345}.
Let $r=2v+1$. Thus $44v^2+36 v + 3 =3\ell^2$. Hence $v$ is divisible by 3, say $v=3w$. So $b=2r=2(2v+1)=2(6w+1)=12w+2$, as in Example~\ref{E:verts4}. It follows that
$132 w^2+ 36 w+1=\ell^2$,
which is the same equation as \eqref{E:notPell}. Consequently, $b$ is one of the numbers $b_n$ of Example~\ref{E:verts4}. It follows that, up to a Euclidean transformation, the $OABC$ is one of the ELEPs of Example~\ref{E:verts4}.

Finally, suppose that the ELEP $OABC$ has a vertical side.
By translating and reflecting in the $x$ and/or $y$ axes if necessary, we may assume that the vertical side lies on the positive $y$-axis, starting at the origin $0$, and that the other side starting at $0$ lies in the 2nd quadrant. Therefore, using complex numbers, we consider an ELEP with vertices 
$A=a\sqrt3i,B=-x+(a+y)\sqrt3i,C=-x+y\sqrt3i, O=0$, where  $a,x,y\in\N$ by Remark~\ref{R:integers}. Then $OA$ has length $a\sqrt3$. Let $b\sqrt3$ denote the length of  $OC$. In particular, we have 
\begin{equation}\label{E:OA2}
3b^2=x^2+3y^2.
\end{equation}

The height from $C$ (to the $y$-axis) is $h:=x$, which is an integer. So, by Remark~\ref{R:widths},
if  $h=h_s$, then $OABC$ is the root parallelogram and $\{a,b\}=\{2,4\}$. So we may assume that $h=h_l$, in which case, by Remark~\ref{R:widths}, 
 $2b=ak$, for some positive integer $k$. Let $a=2q,b=2r$, as before. Then $2b=ak$ gives $2r=qk$. By Theorem~\ref{T:345}, $q,r$ are coprime, so $q=1$ or $2$; that is, $a=2$ or $4$. This completes the proof of the theorem.
\end{proof}



\bibliographystyle{amsplain}
{}

\end{document}